\newtheorem{theorem}{Theorem}[section]
\newtheorem{lemma}[theorem]{Lemma}
\newtheorem{corollary}[theorem]{Corollary}
\theoremstyle{definition}
\newtheorem{definition}[theorem]{Definition}
\newtheorem{remark}{Remark}
\numberwithin{equation}{section}
\DeclareMathOperator{\diam}{diam}
\DeclareMathOperator{\V}{V}
\DeclareMathOperator{\vol}{Vol}\DeclareMathOperator{\A}{A}
\author{Lixia Yuan}
\address{
 Department of Mathematics\\
  Fudan University\\
  Shanghai, China}
\email{yuan\underline{ }lixia@foxmail.com}
\author{Wei Zhao}
\address{
Department of Mathematics\\
East China University of Science and Technology\\
Shanghai, China}
\email{szhao\underline{ }wei@yahoo.com}
\keywords{Finsler manifold, Cheeger constant, the first eigenvalue, Cheeger's inequality, Buser's inequality } \subjclass[2010]{Primary 53B40,
Secondary 47J10}
\begin{document}

\title[]{Cheeger's constant and the first eigenvalue of a closed Finsler manifold}

\begin{abstract}
In this paper, we consider Cheeger's constant and the first eigenvalue of the nonlinear Laplacian on a closed Finsler manifold.  A Cheeger type inequality and a Buser type inequality are established for closed Finsler manifolds. As an application, we obtain a Finslerian version of Yau's lower estimate for the first eigenvalue.
\end{abstract}
\maketitle

\section{Introduction}
The study of the eigenvalues of Laplacian is a classical and important problem in
Riemannian geometry, which highlights the interplay of the geometry-topology of the manifold with the analytic properties of functions.
In order to bound below
the first eigenvalue $\lambda_1(M)$ of a closed Riemannian manifold $(M^n,g)$, Cheeger\cite{C} introduced Cheeger's constant
\[
\mathbbm{h}(M):=\underset{\Gamma}\inf \frac{\A(\Gamma)}{\min\{\vol(D_1),\vol(D_2)\}},
\]
where $\Gamma$ varies over compact $(n-1)$-dimensional submanifolds
of $M$ which divide $M$ into disjoint open submanifolds $D_1$, $D_2$
of $M$ with common boundary $\partial D_1=\partial D_2=\Gamma$, and he proved that
\[
\lambda_1(M)\geq \frac{\mathbbm{h}^2(M)}{4}.\tag{1.1}\label{1.1}
\]
Moreover, even if $M$ has a boundary, Cheeger's inequality still holds if
$\lambda_1(M)$ is subject to the Neumann boundary condition or the Dirichlet boundary condition\cite{C2,SY}. This inequality has found a number of applications, e.g., \cite{B1,C2,G}.

It is an important result due to Buser\cite{B} that $\mathbbm{h}(M)$ is actually equivalent to $\lambda_1(M)$, with constants depending only on the
dimension and the Ricci curvature of $M$. More precisely, if $\mathbf{Ric}\geq -(n-1)\delta^2$, then
\[
\lambda_1(M)\leq C(n)(\delta \mathbbm{h}(M) +\mathbbm{h}^2(M)).\tag{1.2}\label{1.2}
\]
We refer to \cite{C,C2,L,BPP} for more details of these two inequalities.

Finsler geometry is just Riemannian geometry without quadratic restriction. However, there are many Laplacians on a Finsler manifold, e.g., \cite{AL}. Among them, an important one was introduced by Shen\cite{Sh3}, which is obtained by a canonical energy functional on the Sobolev space and is exactly the Laplace-Beltrami operator if the Finsler metric is Riemannian. This Laplacian has a close relationship with curvatures and plays a crucial role in establishing the comparison theorems for Finsler manifolds\cite{Sh1,Sh2,WX,ZY}, but it is quasilinear and dependent on the measure. While
the measure on a Finsler manifold can be defined in various ways and essentially different
results may be obtained, e.g., \cite{AlB,AlT}.
In general, the eigenfunctions of this Laplacian are not smooth but $C^{1,\alpha}$\cite{GS}. Hence, it seems indeed difficult to compute the first eigenvalue even for a the Eucildean sphere $\mathbb{S}^n$ equipped with a Randers metric $F=\alpha+\beta$, where $\alpha$ is the canonical Riemannian metric and $\beta$ is a $1$-form on $\mathbb{S}^n$.
The purpose of this paper is to investigate the relationship between Cheeger's constant and the first eigenvalue of such Laplacian.

Let $(M,F,d\mu)$ be a closed Finsler $n$-manifold, where $d\mu$ be any measure on $M$. According to \cite{Sh1,Sh3}, the first nontrivial eigenvalue $\lambda_1(M)$ is defined as
\[
\lambda_1(M)=\underset{f\in \mathscr{H}_0(M)\backslash\{0\}}{\inf}\frac{\int_M F^{*2}(df)d\mu}{\int_M|f|^2d\mu},
\]
where $F^*$ is the dual metric of $F$ and $\mathscr{H}_0(M):=\{f\in W^1_2(M):\,\int_M fd\mu=0\}$. It follows from \cite{GS,Sh1,Sh3} that $\lambda_1(M)$ is
the smallest positive eigenvalue of Shen's Laplacian.
Inspired by \cite{C3,Sh1,Sh3}, we define Cheeger's constant of a closed Finsler manifold as
\[
\mathbbm{h}(M)=\underset{\Gamma}{\inf}\frac{\min\{\A_\pm(\Gamma)\}}{\min\{\mu(D_1),\mu(D_2)\}},
\]
where $\Gamma$ varies over compact $(n-1)$-dimensional submanifolds
of $M$ which divide $M$ into disjoint open submanifolds $D_1$, $D_2$
of $M$ with common boundary $\partial D_1=\partial D_2=\Gamma$ and $\A_\pm(\Gamma)$ denote the areas of $\Gamma$ induced by the outward and inward normal vector fields $\mathbf{n}_\pm$. In general, $\A_+(\Gamma)\neq \A_-(\Gamma)$. In fact, one can construct examples in which the ratio of these two areas can be arbitrarily lager (see \cite{C3}).
Recall that the reversibility $\lambda_F$ of $F$ is defined by $\lambda_F:=\sup_{y\neq 0}F(y)/F(-y)$ (cf. \cite{R}). Clearly, $\lambda_F\geq 1$. If $\lambda_F=1$, $F$ is called a reversible metric, in which case $\A_+(\cdot)=\A_-(\cdot)$.

First, we have the following Cheeger type inequality.
\begin{theorem}\label{Th1}Let $(M,F,d\mu)$ be a closed Finsler manifold with the reversibility $\lambda_F\leq \lambda$. Then
\[
\lambda_1(M)\geq\frac{\mathbbm{h}^2(M)}{4\lambda^2} .
\]
\end{theorem}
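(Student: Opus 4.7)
The plan is to follow Cheeger's classical argument, adapting two places where Finslerian asymmetries appear: the asymmetry between $F^*(df)$ and $F^*(-df)$, quantified by $\lambda_F\leq\lambda$, and the asymmetry between $\A_+$ and $\A_-$, which is absorbed directly by the minimum appearing in the definition of $\mathbbm{h}(M)$. I would take a first eigenfunction $f$, normalized so that $\int_M f\,d\mu=0$, and set $M_{\pm}:=\{\pm f>0\}$. Because these are disjoint, one of them, say $M_\varepsilon$ with $\varepsilon\in\{+,-\}$, has measure at most $\mu(M)/2$; put $u:=f^\varepsilon\geq 0$, which is Lipschitz and supported in $\overline{M_\varepsilon}$. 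The subtle point here is that, unlike in the Riemannian case, one cannot assume WLOG that $\varepsilon=+$: the Rayleigh quotient is not invariant under $f\mapsto -f$ when $F$ is non-reversible, and this asymmetry is the sole source of the factor $\lambda$ in the final estimate.

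The next step is to show
\[
\int_M F^{*2}(du)\,d\mu\leq\lambda^2\,\lambda_1(M)\int_M u^2\,d\mu.
\]
Testing the weak eigenvalue equation $-\Delta f=\lambda_1(M)\,f$ against $u$ and integrating by parts via the Finslerian identity $F^{*2}(df)=df(\nabla f)$ yields in either case $\int_{M_\varepsilon}F^{*2}(df)\,d\mu=\lambda_1(M)\int_M u^2\,d\mu$. If $\varepsilon=+$, then $du=df$ on $\mathrm{supp}(u)$ and the bound above holds with constant $1$. If $\varepsilon=-$, then $du=-df$ on $\{f<0\}$, so $F^*(du)=F^*(-df)$ there, and the elementary inequality $F^*(-df)\leq\lambda_F F^*(df)\leq\lambda F^*(df)$, direct from the definition of $\lambda_F$, introduces exactly the factor $\lambda^2$.

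With this in hand, Cauchy--Schwarz applied to $d(u^2)=2u\,du$ (valid since $u\geq 0$) gives
\[
\int_M F^*\!\bigl(d(u^2)\bigr)\,d\mu=2\int_M uF^*(du)\,d\mu\leq 2\lambda\,\lambda_1^{1/2}(M)\int_M u^2\,d\mu.
\]
On the other hand, the Finslerian coarea formula rewrites the left-hand side as $\int_0^\infty \A_*(\{u^2=t\})\,dt$, where $\A_*$ is the area of the level set induced by the unit normal $\nabla(u^2)/F(\nabla(u^2))$; this is one of $\A_+$ or $\A_-$ for the partition $D_1(t):=\{u^2>t\}$, $D_2(t):=\{u^2<t\}$. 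Since $D_1(t)\subseteq\mathrm{supp}(u)\subseteq\overline{M_\varepsilon}$ has measure at most $\mu(M)/2$, the definition of $\mathbbm{h}(M)$ gives, for a.e.\ $t$ (Sard's theorem providing smooth level sets),
\[
\A_*(\{u^2=t\})\;\geq\;\min\{\A_+,\A_-\}(\{u^2=t\})\;\geq\;\mathbbm{h}(M)\,\mu(D_1(t)).
\]
Integrating and invoking the layer-cake identity on the right produces $\int_M F^*(d(u^2))\,d\mu\geq\mathbbm{h}(M)\int_M u^2\,d\mu$. Comparing with the upper bound and dividing out $\int u^2>0$ yields $\mathbbm{h}(M)\leq 2\lambda\,\lambda_1^{1/2}(M)$, which is equivalent to the claim.

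The main obstacle I anticipate is the rigorous justification of the integration-by-parts step and the identity $F^{*2}(df)=df(\nabla f)$ for the merely $C^{1,\alpha}$ eigenfunction $f$ paired against the Lipschitz test function $u=f^\varepsilon$; this will rely on a density/approximation argument or on the variational definition of $\lambda_1(M)$ directly. Everything else is a careful but essentially standard combination of Cauchy--Schwarz and the coarea formula, with the reversibility entering exactly once, through the one-sided comparison $F^*(-df)\leq\lambda F^*(df)$ used precisely when the positive nodal region is the one with larger measure.
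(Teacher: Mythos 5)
Your argument is in spirit Cheeger's original eigenfunction argument, while the paper runs the cleaner \emph{variational} version, and the difference matters here precisely because of the regularity issues you correctly anticipate. The paper never touches the eigenfunction: it fixes an arbitrary \emph{smooth} $f$ with $\int_M f\,d\mu=0$, picks a \emph{median} $\alpha$ of $f$, splits $f-\alpha$ into $f_\pm$, and bounds the Rayleigh quotient of $f$ directly via the coarea formula (its Lemma 3.5) and Cauchy--Schwarz, using $F^*(-df)\le\lambda F^*(df)$ on the $f_-$ part; the conclusion $\int F^{*2}(df)\ge \frac{\mathbbm{h}^2}{4\lambda^2}\int |f-\alpha|^2\ge\frac{\mathbbm{h}^2}{4\lambda^2}\int f^2$ then passes to the infimum over $\mathscr H_0(M)$. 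Because $f$ is chosen smooth, Sard's theorem and the coarea formula of Theorem 3.4 apply without apology. By contrast, your route tests the weak equation against $u=f^\varepsilon$ and then applies coarea to $u^2$, which is only $C^{1,\alpha}$; you invoke ``Sard's theorem providing smooth level sets,'' but Sard's theorem fails for $C^{1,\alpha}$ functions in dimension $\ge 2$, so the level sets of $u^2$ need not be admissible $\Gamma$'s in Definition 3.3. This is a genuine gap, distinct from (and on top of) the integration-by-parts concern you already flagged, and both are neatly avoided by working variationally as the paper does. One small plus of your version: you pay the $\lambda^2$ penalty only when the smaller nodal domain is $\{f<0\}$, whereas the paper applies $\lambda\ge 1$ uniformly to both halves; the worst-case constant is the same, so this does not change the theorem. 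To close the gaps in your route you would have to either (i) approximate the eigenfunction by smooth functions in $W^{1,2}$ with controlled Rayleigh quotient, which effectively reduces your proof to the paper's, or (ii) reformulate the Cheeger constant in terms of sets of finite perimeter so that Lipschitz level sets are admissible.
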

It should be remarkable that if $M$ has a boudary, Chen\cite{C3} proved the above inequality still holds if $\lambda_1$ subject to the Dirichlet boundary condition. Recall
the uniform constant\cite{E} $\Lambda_F$ is defined as
\[
\Lambda_F:=\underset{X,Y,Z\in TM\backslash\{0\}}{\sup}\frac{g_Z(X,X)}{g_Z(Y,Y)}.
\]
 $\Lambda_F\geq1$ with equality if and only if $F$ is Riemannian.
As an application of Theorem \ref{Th1}, we obtain the following theorem, which is a Finslerian version of Yau's lower estimate for the first eigenvalue\cite{Y}.
\begin{theorem}Let $(M,F,d\mu)$ be a closed Finsler manifold, where $d\mu$ is either the Busemann-Hausdorff measure or the Holmes-Thompson measure.
 Then $\lambda_1(M)$ can be bounded from below in terms of the diameter, volume, uniform constant and a lower bound for the Ricci curvature.
\end{theorem}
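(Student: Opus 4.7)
The plan is to combine Theorem \ref{Th1} with a lower bound on the Cheeger constant $\mathbbm{h}(M)$. From the definitions of $\lambda_F$ and $\Lambda_F$ one readily checks $\lambda_F^2\le \Lambda_F$, since for any $y\ne 0$ one has $F^2(y)/F^2(-y)=g_y(y,y)/g_{-y}(y,y)\le \Lambda_F$. Plugging this into Theorem \ref{Th1} yields
\[
 \lambda_1(M) \;\geq\; \frac{\mathbbm{h}(M)^2}{4\Lambda_F},
\]
so it suffices to bound $\mathbbm{h}(M)$ from below by a quantity depending only on $n$, $\diam(M)$, $\vol(M)=\mu(M)$, $\Lambda_F$ and the lower Ricci bound $\mathbf{Ric}\ge -(n-1)K$. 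The strategy follows Yau's Riemannian argument: for a dividing hypersurface $\Gamma$ with components $D_+,D_-$ where $\mu(D_-)\le \mu(D_+)$, we estimate $\mu(D_-)$ from above in terms of $\min\{\A_+(\Gamma),\A_-(\Gamma)\}$ via a co-area formula and a Jacobian comparison adapted to the Finsler setting.

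Concretely, let $\mathbf{n}_\pm$ be the inward/outward unit normals along $\Gamma$, and consider the two Finsler normal exponential maps $\Phi_\pm(p,t)=\exp_p(t\mathbf{n}_\pm(p))$. Since $M$ is closed, each $\Phi_\pm$ foliates $D_\pm$ by unit-speed geodesics up to the cut locus, with cut-time $\tau_\pm(p)\le \diam(M)$. In these coordinates $d\mu = J_\pm(p,t)\,dt\,d\A_\pm(p)$, so
\[
 \mu(D_\pm) \;=\; \int_\Gamma\!\int_0^{\tau_\pm(p)} J_\pm(p,t)\,dt\,d\A_\pm(p).
\]
Under $\mathbf{Ric}\ge -(n-1)K$, a Riccati-type comparison together with the Finsler Bishop--Gromov-type results for the Busemann--Hausdorff and Holmes--Thompson measures (Shen, Wu--Xin, Zhao--Shen) bounds $J_\pm(p,t)$ pointwise by a function of $n$, $K$ and $t$, provided the S-curvature-like discrepancy intrinsic to those measures is absorbed into a global factor. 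On a closed manifold this global factor can be controlled by $\vol(M)$ and $\Lambda_F$. Integrating up to $\diam(M)$ and taking the smaller of the two normals gives
\[
 \mu(D_-) \;\leq\; \min\{\A_+(\Gamma),\A_-(\Gamma)\}\cdot C_0\bigl(n,K,\diam(M),\vol(M),\Lambda_F\bigr),
\]
and hence $\mathbbm{h}(M)\ge 1/C_0$. Combined with the display above, this yields the desired lower bound for $\lambda_1(M)$.

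The principal obstacle is the Jacobian comparison in the non-reversible Finsler setting. The forward and backward normal exponentials $\Phi_+,\Phi_-$ give genuinely different Jacobians, and one must invoke $\Lambda_F$ to compare them frame-by-frame; this is precisely the reason that $\lambda^2\le \Lambda_F$ enters through Theorem \ref{Th1} and that $\mathbbm{h}(M)$ was defined using $\min\{\A_+,\A_-\}$. Furthermore, for the Busemann--Hausdorff and Holmes--Thompson measures the natural volume comparison bounds the Jacobian by the Riemannian model only up to an S-curvature error; without a pointwise S-curvature hypothesis one can only bound this error in an averaged sense on the closed manifold, producing an unavoidable $\vol(M)$-dependence. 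This is the step that forces $\vol(M)$ into the final constant and distinguishes the Finslerian Yau estimate from its Riemannian prototype.
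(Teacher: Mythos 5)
Your high-level framing agrees with the paper: combine Theorem \ref{Th1} with a lower bound on $\mathbbm{h}(M)$, and use $\lambda_F^2\le\Lambda_F$ to replace the reversibility constant by the uniformity constant. But the route you take to bound $\mathbbm{h}(M)$ is not the paper's, and as written it has a genuine gap. The paper quotes Theorem \ref{croke}, a Croke/Santal\'o--type isoperimetric inequality proved by integral geometry on the unit sphere bundle (geodesics emanating from \emph{points} of $D_1$, à la Yau and Croke), and then simply multiplies the two estimates together. You instead propose a Heintze--Karcher--type argument: foliate $D_\pm$ by the normal exponential map from $\Gamma$ and bound the Jacobian $J_\pm(p,t)$ pointwise under $\mathbf{Ric}\ge-(n-1)K$.

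The problem is that a Riccati comparison for the Jacobian along normal geodesics from a hypersurface requires control of the \emph{initial condition}, i.e.\ the shape operator (or at least the mean curvature) of $\Gamma$ at $t=0$. The Ricci lower bound only controls the derivative of the Riccati equation; without a pointwise bound on the second fundamental form of $\Gamma$, the Jacobian $J_\pm(p,t)$ is \emph{not} bounded by a function of $n$, $K$, and $t$ alone. Since $\mathbbm{h}(M)$ is an infimum over \emph{all} separating hypersurfaces $\Gamma$, not just one with controlled curvature, the claimed pointwise bound on $J_\pm$ fails and the inequality $\mu(D_-)\le\min\{\A_\pm(\Gamma)\}\cdot C_0$ does not follow from what you wrote. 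This is exactly why Yau's original proof (and Croke's, and the paper's Theorem \ref{croke}) parameterize by geodesics issuing from a \emph{point} rather than from $\Gamma$: the Jacobian comparison for pointed exponential maps needs no hypothesis on $\Gamma$.

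A secondary, smaller point: you attribute the $\vol(M)$-dependence in the final constant to an ``averaged'' control of the S-curvature error. This is not where it comes from. The distortion of either canonical measure is controlled \emph{pointwise} by the uniformity constant (the paper uses $\Lambda^{-n}\le e^{-\tau(y)}\le\Lambda^{n}$ in Lemma \ref{le2}), so no averaging is needed. The $\mu(M)$-dependence in Theorem \ref{croke} and hence in Theorem \ref{Yau} arises structurally from the Croke/Santal\'o argument itself, which bounds the boundary area from below by a quantity proportional to $\mu(D_1)\mu(D_2)/\mu(M)$, placing $\mu(M)$ in the numerator of the Cheeger constant estimate.
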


Moreover, we also have a Buser type inequality for Finsler manifolds.
\begin{theorem}\label{Th2}
Let $(M,F,d\mu)$ be a closed Finsler $n$-manifold with the Ricci curvature $\mathbf{Ric}\geq -(n-1)\delta^2$ and the uniform constant $\Lambda_F\leq \Lambda$. Then
\[
\lambda_1(M)\leq C(n,\Lambda)\left(\delta {\mathbbm{h}}(M)+{\mathbbm{h}^2}(M)\right).
\]
\end{theorem}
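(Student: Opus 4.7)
The plan is to adapt Buser's original argument by producing an explicit test function from a near-optimal Cheeger partition and estimating its Rayleigh quotient; the Finsler setting forces careful bookkeeping of the directional asymmetries encoded in $F^*(\pm df)$ and $\A_\pm(\Gamma)$, and the uniform constant $\Lambda$ is the tool that bridges these asymmetries while enabling Bishop--Gromov-type volume comparison.

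For any $\varepsilon>0$ I would first select a compact $(n-1)$-submanifold $\Gamma$ partitioning $M$ into $D_1,D_2$ with $\mu(D_1)\le\mu(D_2)$ and
\[
\frac{\min\{\A_+(\Gamma),\A_-(\Gamma)\}}{\mu(D_1)}\le \mathbbm{h}(M)+\varepsilon;
\]
after relabeling sides if needed, assume the minimum is $\A_+(\Gamma)$ with respect to the normal pointing into $D_2$. Using the Finsler distance $\rho(x)=\dd_F(x,\Gamma)$ and a small parameter $r>0$, set
\[
\phi_r(x)=\min\left\{1,\tfrac{\rho(x)}{r}\right\}\bigl(\mathbbm{1}_{D_2}(x)-\mathbbm{1}_{D_1}(x)\bigr),\qquad \psi_r=\phi_r-\tfrac{1}{\mu(M)}\int_M\phi_r\,d\mu,
\]
oriented so that $d\phi_r$ on $\Gamma$ points in the direction of $\mathbf n_+$ and hence induces $\A_+(\Gamma)$ in the co-area formula.

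The analytic core is to control the Rayleigh quotient of $\psi_r$. For the denominator, since $|\phi_r|\to 1$ off a tubular neighborhood of $\Gamma$, an elementary computation gives
\[
\int_M\psi_r^2\,d\mu\ge \frac{4\mu(D_1)\mu(D_2)}{\mu(M)}-O\bigl(\mu(\{\rho<r\})\bigr)\ge c\,\mu(D_1),
\]
once a tubular-volume bound $\mu(\{\rho<r\})\le C(n,\Lambda)\,r\,\A_+(\Gamma)\,e^{(n-1)\delta r}$ is established via Finsler Bishop--Gromov comparison in the spirit of Wu--Xin and Shen. For the numerator, the Finsler co-area formula yields
\[
\int_M F^{*2}(d\phi_r)\,d\mu\le \frac{C(n,\Lambda)}{r^2}\int_0^r \A_+(\{\rho=s\})\,ds\le \frac{C(n,\Lambda)}{r}\A_+(\Gamma)\,e^{(n-1)\delta r},
\]
with constants absorbing the pointwise comparison between $F^{*2}(d\rho)$ and $F^{*2}(-d\rho)$ and between the areas induced by $\mathbf n_\pm$ on level sets parallel to $\Gamma$. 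Combining the two estimates,
\[
\lambda_1(M)\le \frac{\int F^{*2}(d\psi_r)\,d\mu}{\int\psi_r^2\,d\mu}\le C(n,\Lambda)\,\frac{\A_+(\Gamma)}{\mu(D_1)}\cdot\frac{e^{(n-1)\delta r}}{r}\le C(n,\Lambda)\,(\mathbbm{h}(M)+\varepsilon)\,\frac{e^{(n-1)\delta r}}{r},
\]
and choosing $r\sim\min\{\delta^{-1},\mathbbm{h}(M)^{-1}\}$ followed by $\varepsilon\downarrow 0$ yields the claimed bound $\lambda_1(M)\le C(n,\Lambda)(\delta\mathbbm{h}(M)+\mathbbm{h}^2(M))$.

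The main obstacle I anticipate is the Finsler tubular-volume and parallel-area estimate under a Ricci lower bound. In the Finsler setting, Bishop--Gromov comparison is naturally formulated for forward geodesic balls rather than tubular neighborhoods, so one must carefully set up a Jacobi-field argument along geodesics emanating orthogonally from $\Gamma$, using $\mathbf n_+$-geodesics on one side and $\mathbf n_-$-geodesics on the other. The uniform constant $\Lambda$ enters both in bounding the resulting Jacobi fields uniformly on the two sides and in converting the areas induced by $\mathbf n_\pm$ on equidistant surfaces back to $\A_+(\Gamma)$; this is precisely where the hypothesis $\Lambda_F\le\Lambda$ is stronger than, and genuinely needed beyond, the reversibility bound $\lambda_F\le\lambda$ of Theorem \ref{Th1}.
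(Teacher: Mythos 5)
Your outline follows Buser's strategy in spirit — near-optimal partition, truncated-distance test function, Rayleigh quotient estimate — but it hinges on the single inequality
\[
\mu(\{\rho<r\})\ \le\ C(n,\Lambda)\,r\,\A_+(\Gamma)\,e^{(n-1)\delta r},
\]
and this is exactly the step that fails. A tubular-volume bound of this form is a Heintze--Karcher type estimate: the Jacobian of the normal exponential map from $\Gamma$ is governed not only by the ambient Ricci lower bound but also by the second fundamental form of $\Gamma$. With no control on the shape operator of $\Gamma$, the Jacobian (equivalently, the areas $\A(\Gamma_s)$ of the parallel level sets) can grow without bound even in constant curvature; a near-optimal Cheeger partition comes with no a priori bound on its principal curvatures. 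This is precisely the ``problem of hair'' that the paper flags at the start of Step~1 of its proof (citing Buser): for $n\ge 3$ one can have $\max_{p\in M}d(p,\Gamma)<\rho$ for any $\rho>0$, so the $r$-neighborhood of $\Gamma$ may already be all of $M$, and then both your denominator estimate $\int\psi_r^2\ge c\,\mu(D_1)$ and your numerator estimate collapse. Invoking Bishop--Gromov for forward balls (Wu--Xin, Shen) does not repair this, since Bishop--Gromov compares volumes of metric balls, not tubular neighborhoods of hypersurfaces.

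The paper's proof circumvents this obstruction by a fundamentally different mechanism, which your sketch does not supply. It covers $M$ by Dirichlet regions of a forward $r$-packing, proves a Buser-type isoperimetric inequality for star-like domains (Lemma~5.5 / Corollary~5.6) using the reversed metric $\tilde F(y)=F(-y)$ to control both normals, and then \emph{replaces} $\Gamma$ by the set $\tilde\Gamma$ where $\mu(D_1\cap B^+_q(r/2\Lambda))=\mu(D_2\cap B^+_q(r/2\Lambda))$, which separates $M$ into $\tilde D_1,\tilde D_2$ in a way that is insensitive to hair. Only after this replacement does it build the truncated-distance test function (from $\tilde\Gamma$, not from $\Gamma$), and it plugs the resulting Rayleigh quotient bounds into a minimax lemma (Lemma~6.1) comparing $\lambda_1(M)$ to $\max\{\lambda_1(\tilde D_1),\lambda_1(\tilde D_2)\}$ up to a factor $\lambda_F^2\le\Lambda$. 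To make your approach work you would need to reproduce this replacement of $\Gamma$ by $\tilde\Gamma$ and the star-like isoperimetric inequality; as written, the tubular-volume claim is a genuine gap, not a routine Bishop--Gromov application.
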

In the Riemannian case, $\lambda_F=\Lambda_F=1$. Hence, Theorem 1.1 implies (\ref{1.1}) while Theorem 1.3 implies (\ref{1.2}). In particular, for a Randers metric $F=\alpha+\beta$, the uniform constant $\Lambda_F=(1+b)^2(1-b)^{-2}$, where $b:=\sup_{x\in M}\|\beta\|_\alpha(x)$ (see Corollary \ref{Randers1} below). For the Busemann-Hausdorff measure or the Holmes-Thompson measure, the S-curvature of a Berwald metric always vanishes (see Theorem \ref{Berwald1} below). Then we have the following corollary.
\begin{corollary}Let $(M,F,d\mu)$ be a closed Finsler $n$-manifold with the Ricci curvature $\mathbf{Ric}\geq -(n-1)k^2$.

\noindent (1) If $F=\alpha+\beta$, then $\lambda_1(M)\leq C(n,b)\left(\delta {\mathbbm{h}}(M)+{\mathbbm{h}^2}(M)\right)$.

\noindent (2) If $F$ is a Berwald metric, then $\lambda_1(M)\leq C(n,\lambda_F)\left(\delta {\mathbbm{h}}(M)+{\mathbbm{h}^2}(M)\right)$.

\end{corollary}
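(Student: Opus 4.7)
The plan is to deduce both inequalities as immediate consequences of Theorem \ref{Th2}, by expressing the uniform constant $\Lambda_F$ appearing in its hypothesis in terms of the data allowed in each conclusion.

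For part (1), I would invoke the explicit identity $\Lambda_F = (1+b)^2/(1-b)^2$ for a Randers metric $F = \alpha + \beta$, where $b = \sup_M \|\beta\|_\alpha < 1$; this is the content of Corollary \ref{Randers1} referenced in the text, and it follows from a direct computation using the standard formula for the fundamental tensor $g_{ij}$ of $\alpha + \beta$. One fixes $Z \in TM \setminus \{0\}$, optimizes $g_Z(X,X)/g_Z(Y,Y)$ over unit vectors $X, Y$, and checks that the extremes are attained along directions parallel and anti-parallel to the dual vector of $\beta$. Substituting $\Lambda = (1+b)^2/(1-b)^2$ into Theorem \ref{Th2} immediately converts $C(n, \Lambda)$ into $C(n, b)$ and yields the stated inequality.

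For part (2), the reduction uses the special structure of Berwald metrics. Because all fibre norms $F_x$ of a Berwald metric are, via parallel transport, linearly isometric to a common Minkowski norm, $\Lambda_F$ equals the uniform constant of that single Minkowski norm. Such a Minkowski constant can be controlled by $\lambda_F$ (and the dimension) using only the $0$-homogeneity and strong convexity of the fundamental tensor; I would establish an estimate of the form $\Lambda_F \leq C(n, \lambda_F)$ in this way. Since Theorem \ref{Berwald1} guarantees that the $S$-curvature of $F$ vanishes identically for the Busemann--Hausdorff or Holmes--Thompson measure, any weighted-Ricci quantity implicit in Theorem \ref{Th2} coincides with the ordinary Ricci, so the hypothesis $\mathbf{Ric} \geq -(n-1)\delta^2$ is exactly what is needed. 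Feeding $\Lambda = C(n, \lambda_F)$ into Theorem \ref{Th2} produces $\lambda_1(M) \leq C(n, \lambda_F)\bigl(\delta \mathbbm{h}(M) + \mathbbm{h}^2(M)\bigr)$.

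The main obstacle is the pointwise estimate $\Lambda_F \leq C(n, \lambda_F)$ in part (2): one must control the full family of fundamental tensors $g_Z$ of a Minkowski norm purely by its reversibility, and the Berwald hypothesis is used precisely to make this a pointwise problem on a single tangent space. The Randers case, by contrast, reduces to a direct plug-in once the formula for $\Lambda_F$ is in hand.
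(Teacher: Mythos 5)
Part (1) of your proposal is correct and matches the paper exactly: you plug the identity $\Lambda_F=(1+b)^2/(1-b)^2$ from Corollary \ref{Randers1} into Theorem \ref{Th2}.

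Part (2) has a genuine gap. The key step you propose --- establishing $\Lambda_F \le C(n,\lambda_F)$ for a Berwald metric by exploiting that all tangent spaces carry a common Minkowski norm --- cannot work, because no such bound exists even for a single Minkowski norm. Reversibility controls the ratio $F(y)/F(-y)$ but says nothing about how wildly the fundamental tensor $g_y$ can vary as $y$ moves on the indicatrix. For instance, smooth out the $\ell^p$-norm on $\mathbb{R}^n$ for $p$ large (or close to $1$): the resulting Minkowski norm is reversible, so $\lambda_F=1$, while the unit sphere develops nearly flat faces and its second fundamental form degenerates, forcing $g_X(Y,Y)/g_Z(Y,Y)$ and hence $\Lambda_F$ to blow up. Thus a Berwald metric can have $\lambda_F=1$ and $\Lambda_F$ arbitrarily large, and no inequality $\Lambda_F\le C(n,\lambda_F)$ can hold.

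What the paper actually does is bypass the uniform constant entirely. It observes (Theorem \ref{Sone}) that the only place $\Lambda_F$ enters the volume-comparison chain (Lemma \ref{le2}) is through a bound on $e^{-\tau}$, and that this can be replaced by a lower bound on the S-curvature along geodesics; the reversibility $\lambda_F$ is still needed separately in Lemma \ref{le3} and Lemma \ref{Coule}. This produces the variant $\lambda_1(M)\le C(n,\lambda_F,\eta)(\delta\mathbbm{h}+\mathbbm{h}^2)$ under $\mathbf{S}\ge (n-1)\eta$. Theorem \ref{Berwald1} then shows $\mathbf{S}\equiv 0$ for Berwald metrics with either the Busemann--Hausdorff or Holmes--Thompson measure, so $\eta=0$ and the constant reduces to $C(n,\lambda_F)$. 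You correctly identified that the vanishing of the S-curvature is relevant, but you fed this into Theorem \ref{Th2} (which still requires a $\Lambda_F$ bound) rather than into the S-curvature variant; that misrouting is where the argument breaks.
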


\section{Preliminaries}
In this section, we recall some definitions and properties about Finsler manifolds. See \cite{BCS,Sh1} for more details.

Let $(M,F)$ be a (connected) Finsler $n$-manifold with Finsler metric $%
F:TM\rightarrow [0,\infty)$.
Let $(x,y)=(x^i,y^i)$ be local coordinates on $%
TM$. Define
\begin{align*}
g_{ij}(x,y):=\frac12\frac{\partial^2 F^2(x,y)}{\partial y^i\partial y^j}, \ G^i(y):=\frac14 g^{il}(y)\left\{2\frac{\partial g_{jl}}{\partial x^k}(y)-\frac{\partial g_{jk}}{\partial x^l}(y)\right\}y^jy^k,
\end{align*}
where $G^i$ is the geodesic coefficients. A smooth curve $\gamma(t)$ in $M$ is called a (constant speed) geodesic if it satisfies
\[
\frac{d^2\gamma^i}{dt^2}+2G^i\left(\frac{d\gamma}{dt}\right)=0.
\]
Define the Ricci curvature by
$\mathbf{Ric}(y):=\overset{n}{\underset{i=1}{\sum}}R^i_{\,i}(y)$,
where
\[
R^i_{\,k}(y):=2\frac{\partial G^i}{\partial x^k}-y^j\frac{\partial^2G^i}{\partial x^j\partial y^k}+2G^j\frac{\partial^2 G^i}{\partial y^j \partial y^k}-\frac{\partial G^i}{\partial y^j}\frac{\partial G^j}{\partial y^k}.
\]
Set $S_xM:=\{y\in T_xM:F(x,y)=1\}$ and $SM:=\cup_{x\in M}S_xM$. The reversibility $\lambda_F$ and the uniformity constant $\Lambda_F$ of $(M,F)$ are defined by
\[
\lambda_F:=\underset{y\in SM}{\sup}F(-y),\ \Lambda_F:=\underset{X,Y,Z\in SM}{\sup}\frac{g_X(Y,Y)}{g_Z(Y,Y)}.
\]
Clearly, ${\Lambda_F}\geq \lambda_F^2\geq 1$. $\lambda_F=1$ if and only if $F$ is reversible, while $\Lambda_F=1$ if and
only if $F$ is Riemannian.

The dual Finsler metric $F^*$ on $M$ is
defined by
\begin{equation*}
F^*(\eta):=\underset{X\in T_xM\backslash \{0\}}{\sup}\frac{\eta(X)}{F(X)}, \ \
\forall \eta\in T_x^*M.
\end{equation*}
The Legendre transformation $\mathfrak{L} : TM \rightarrow T^*M$ is defined
by
\begin{equation*}
\mathfrak{L}(X):=\left \{
\begin{array}{lll}
& g_X(X,\cdot) & \ \ \ X\neq0, \\
& 0 & \ \ \ X=0.%
\end{array}
\right.
\end{equation*}
In particular, $F^*(\mathfrak{L}(X))=F(X)$. Now let $f : M \rightarrow \mathbb{R}$ be a smooth function on $M$. The
gradient of $f$ is defined by $\nabla f = \mathfrak{L}^{-1}(df)$. Thus we
have $df(X) = g_{\nabla f} (\nabla f,X)$.

Let $d\mu$ be a measure on $M$. In a local coordinate system $(x^i)$,
express $d\mu=\sigma(x)dx^1\wedge\cdots\wedge dx^n$. In particular,
the Busemann-Hausdorff measure $d\mu_{BH}$ and the Holmes-Thompson measure $d\mu_{HT}$ are defined by
\begin{align*}
&d\mu_{BH}=\sigma_{BH}(x)dx:=\frac{\vol(\mathbb{B}^{n})}{\vol(B_xM)}dx^1\wedge\cdots\wedge dx^n,\\ &d\mu_{HT}=\sigma_{HT}(x)dx:=\left(\frac1{\vol(\mathbb{B}^{n})}\int_{B_xM}\det g_{ij}(x,y)dy^1\wedge\cdots\wedge dy^n \right) dx^1\wedge\cdots\wedge dx^n,
\end{align*}
where $B_xM:=\{y\in T_xM: F(x,y)<1\}$.

For $y\in T_xM\backslash\{0\}$, define the distorsion of $(M,F,d\mu)$ as
\begin{equation*}
\tau(y):=\log \frac{\sqrt{\det g_{ij}(x,y)}}{\sigma(x)}.
\end{equation*}
And S-curvature $\mathbf{S}$ is defined by
\begin{equation*}
\mathbf{S}(y):=\frac{d}{dt}[\tau(\dot{\gamma}(t))]|_{t=0},
\end{equation*}
where $\gamma(t)$ is the geodesic with $\dot{\gamma}(0)=y$.

\section{A Cheeger type inequality for Finsler manifolds}
\begin{definition}[\cite{GS,Sh3}]
Let $(M,F,d\mu)$ be a compact Finsler manifold. Denote
$\mathscr{H}_0(M)$ by
\[
\mathscr{H}_0(M):= \left \{
\begin{array}{lll}
&\{f\in W^1_2(M):\,\int_M fd\mu=0\}, &\partial M=\emptyset,\\
\\
&\{f\in W^1_2(M):\,f|_{\partial M}=0\},&\partial M\neq\emptyset.
\end{array}
\right.
\]

Define the canonical energy functional $E$ on
$\mathscr{H}_0(M)\backslash\{0\}$ by
\[
E(u):=\frac{\int_M F^*(du)^2d\mu}{\int_M u^2 d\mu}.
\]

$\lambda$ is an eigenvalue of $(M,F,d\mu)$ if there is a function
$u\in\mathscr{H}_0(M)-\{0\}$ such that $d_u E=0$ with
$\lambda=E(u)$. In this case, $u$ is called an eigenfunction
corresponding to $\lambda$. The first eigenvalue of $(M,F,d\mu)$,
$\lambda_1(M)$, is defined by
\[\lambda_1(M):=\underset{u\in
\mathscr{H}_0(M)\backslash\{0\} }{\inf}E(u),
\]
which is the smallest positive critical value of $E$.
\end{definition}
\begin{remark}
It should be noticeable that $u$ is an eigenfunction
corresponding to $\lambda$ if and only if
\[
\Delta u +\lambda u=0 \text{ (in the weak sense)},
\]
where $\Delta:=\text{div}\circ \nabla$ is the Laplacian induced by Shen\cite{GS,Sh1,Sh3}.
\end{remark}

Let
$i:\Gamma{\hookrightarrow}M$ be a smooth hypersurface
embedded in $M$. For each $x\in\Gamma$, there exist two 1-forms
$\omega_\pm(x)\in T^*_xM$ satisfying
$i^*(\omega_\pm(x))=0$ and $F^*(\omega_\pm(x))=1$. Then $\textbf{n}_\pm(x):=\mathfrak{L}^{-1}(\omega_\pm(x))$ are two unit normal
vectors on $\Gamma$. In general, $\textbf{n}_-\neq -\textbf{n}_+$(see \cite{C3}). Let
$d\A_\pm$ denote the (area) measures induced by $\textbf{n}_\pm$, i.e.,
$d\A_\pm=i^*(\textbf{n}_\pm\rfloor d\mu)$.

\begin{definition}\label{def2}
Let $(M,F,d\mu)$ be a closed Finsler manifold. Cheeger's constant
${\mathbbm{h}}(M)$ is defined by
\[
\mathbbm{h}(M)=\underset{\Gamma}{\inf}\frac{\min\{\A_\pm(\Gamma)\}}{\min\{\mu(D_1),\mu(D_2)\}},
\]
where $\Gamma$ varies over compact $(n-1)$-dimensional submanifolds
of $M$ which divide $M$ into disjoint open submanifolds $D_1$, $D_2$
of $M$ with common boundary $\partial D_1=\partial D_2=\Gamma$.
\end{definition}

To prove Theorem \ref{Th1}, we need the following co-area formula.
\begin{theorem}[\cite{Sh1}]\label{Coar}
Let $(M,F,d\mu)$ be a Finsler manifold and $\phi$ is piecewise
smooth function with compact support. Then for any continuous
function $f$,
\[
\int_M f
d\mu=\int^\infty_{-\infty}\left[\int_{\phi^{-1}(t)}f\frac{d\A_{\mathbf{n}}}{F(\nabla\phi)}\right]dt,
\]
where $\mathbf{n}:=\nabla\phi/F(\nabla\phi)$.
\end{theorem}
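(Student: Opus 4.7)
The plan is to establish the formula first for smooth $\phi$ with compact support, then extend to the piecewise smooth case by decomposing $M$ into finitely many pieces on which $\phi$ is smooth and summing. In the smooth setting, Sard's theorem tells us that the critical values of $\phi$ form a set of Lebesgue measure zero in $\mathbb{R}$, so the outer integral on the right only sees regular level sets; the critical set inside $M$ is a null set for $d\mu$ on the complement of a neighborhood of fibers we care about, and in any case the identity we aim at is an integral one.

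The core of the argument is a purely local computation near a regular point of $\phi$. Choose coordinates $(x^1,\ldots,x^n)$ in which $\phi=x^n$, so that the level sets are coordinate hyperplanes $\{x^n=t\}$. Writing $d\mu=\sigma(x)\,dx^1\wedge\cdots\wedge dx^n$ and using $d\A_{\mathbf{n}}=i^*(\mathbf{n}\rfloor d\mu)$, only the $\mathbf{n}^n$ component of $\mathbf{n}$ survives upon pullback to $\{x^n=t\}$, giving (with $\mathbf{n}$ oriented toward increasing $\phi$)
\[
d\A_{\mathbf{n}}=\sigma(x)\,\mathbf{n}^n\,dx^1\wedge\cdots\wedge dx^{n-1}.
\]
The single Finslerian input is the identity $d\phi(\mathbf{n})=F(\nabla\phi)$, which follows immediately from $\mathfrak{L}(\nabla\phi)=d\phi$ and $g_y(y,y)=F^2(y)$: one has $d\phi(\nabla\phi)=g_{\nabla\phi}(\nabla\phi,\nabla\phi)=F^2(\nabla\phi)$, and division by $F(\nabla\phi)$ yields the claim. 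In the chart above this forces $\mathbf{n}^n=F(\nabla\phi)$, hence $d\A_{\mathbf{n}}/F(\nabla\phi)=\sigma(x)\,dx^1\wedge\cdots\wedge dx^{n-1}$ on each level set.

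Once this local identification is in hand, the formula reduces to Fubini's theorem applied to $\int f\sigma\,dx^1\cdots dx^n$. A partition of unity subordinate to an atlas of such adapted charts globalizes the result to arbitrary continuous $f$. The main obstacle, as with most co-area formulas, is the bookkeeping rather than any single deep step: one must verify that the interior-product definition of $d\A_{\mathbf{n}}$ coincides with the natural hypersurface measure on each regular level set, and that the orientation of $\mathbf{n}$ is chosen consistently so that both sides of the formula have the same sign. Once these details are settled and the critical set is discarded via Sard, the piecewise smooth extension is routine additivity over the smooth pieces, the $(n-1)$-dimensional interfaces between pieces contributing nothing to either integral.
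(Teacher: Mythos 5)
The paper does not prove this theorem; it states it with a citation to Shen's \emph{Lectures on Finsler Geometry} [Sh1]. Your coordinate-straightening/Fubini argument is the standard route, and the single Finslerian input is used correctly: from $\mathfrak{L}(\nabla\phi)=d\phi$ and $g_{\nabla\phi}(\nabla\phi,\nabla\phi)=F^2(\nabla\phi)$ one indeed gets $d\phi(\mathbf{n})=F(\nabla\phi)$, which in the chart $\phi=x^n$ forces $\mathbf{n}^n=F(\nabla\phi)$, and the interior-product computation and Fubini then go through as you describe. The piecewise-smooth extension by additivity over smooth pieces is likewise unobjectionable.

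The step that needs repair is the appeal to Sard's theorem. Sard controls the Lebesgue measure of the set of critical \emph{values} of $\phi$ in $\mathbb{R}$; it says nothing about the $d\mu$-measure of the critical set $\{\nabla\phi=0\}$ inside $M$, and your parenthetical claim that this set is a $d\mu$-null set does not follow from Sard and is false in general (take $\phi$ locally constant on an open set of positive measure). On such a set the left-hand side picks up $\int f\,d\mu$, while the right-hand side does not, since the corresponding $t$-values form a Lebesgue-null set. What your local computation actually establishes unconditionally is the equivalent weighted form
\[
\int_M F^*(d\phi)\,f\,d\mu=\int^\infty_{-\infty}\left[\int_{\phi^{-1}(t)}f\,d\A_{\mathbf{n}}\right]dt,
\]
which, not incidentally, is exactly the version invoked in the proof of Lemma~\ref{chele}(2). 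Passing from this to the stated form with $F(\nabla\phi)$ in the denominator requires the \emph{additional} hypothesis that the critical set is $d\mu$-negligible; that hypothesis should be stated explicitly rather than deduced from Sard.
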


Theorem \ref{Coar} then yields the following lemma.
\begin{lemma}\label{chele}
For all positive function $f\in C^\infty(M)$, we have
\begin{align*}
(1)&\int_0^\infty\mu(\Omega(t))dt=\int_Mfd\mu,\\
(2)&\,{\mathbbm{h}}(M)\int^\infty_{0}\min\{\mu(\Omega(t)),\mu(M)-\mu(\Omega(t))\}dt\leq \int_MF^*(df)d\mu,
\end{align*}
where $\Omega(t):=\{x\in M:f(x)\geq t\}$.
\end{lemma}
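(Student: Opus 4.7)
Part (1) is the layer-cake (Cavalieri) representation. Since $f>0$, one writes $f(x)=\int_0^\infty \mathbf{1}_{\{f\geq t\}}(x)\,dt$, and Fubini's theorem (applicable because the integrand is non-negative) gives $\int_M f\,d\mu=\int_0^\infty \mu(\Omega(t))\,dt$ at once. Nothing Finslerian is used in this step.

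For part (2), the plan is to combine the co-area formula of Theorem \ref{Coar} with Definition \ref{def2}. Applying Theorem \ref{Coar} with $\phi=f$ and integrand $F(\nabla f)$, and using the identity $F^*(df)=F(\nabla f)$ (which follows from $F^*\circ\mathfrak{L}=F$), the factor $F(\nabla f)/F(\nabla\phi)$ reduces to $1$, so one obtains
\[
\int_M F^*(df)\,d\mu = \int_0^\infty \A_{\mathbf{n}}(f^{-1}(t))\,dt,
\]
where $\mathbf{n}=\nabla f/F(\nabla f)$ is the unit normal pointing in the direction of increasing $f$. The lower limit is $0$ rather than $-\infty$ because $f>0$ forces $f^{-1}(t)=\emptyset$ for $t\leq 0$.

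Next, for a.e. $t$, Sard's theorem says $t$ is a regular value of $f$, so $\Gamma_t:=f^{-1}(t)$ is a smooth closed hypersurface cutting $M$ into the two open pieces $\{f>t\}$ and $\{f<t\}$, which play the role of $D_1,D_2$ in Definition \ref{def2}. The normal $\mathbf{n}$ must coincide with one of the two distinguished normals $\mathbf{n}_\pm$ of $\Gamma_t$, since its Legendre dual $\mathfrak{L}(\mathbf{n})$ is a unit co-vector annihilating $T\Gamma_t$ and only $\omega_\pm$ have those two properties. Thus
\[
\A_{\mathbf{n}}(\Gamma_t)\geq \min\{\A_+(\Gamma_t),\A_-(\Gamma_t)\}\geq \mathbbm{h}(M)\min\{\mu(\Omega(t)),\mu(M)-\mu(\Omega(t))\}
\]
directly from the definition of $\mathbbm{h}(M)$. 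Integrating over $t$ and combining with the displayed co-area identity yields inequality (2).

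The main technical subtlety I expect is handling the critical values of $f$, but this is dispatched by Sard's theorem: the set of such $t$ has Lebesgue measure zero, so the identification of $\Gamma_t$ as a smooth separating hypersurface and the Cheeger-type bound above hold for a.e. $t$, which suffices after integration. Everything else is a direct substitution into the two results (Theorem \ref{Coar} and Definition \ref{def2}) already at hand.
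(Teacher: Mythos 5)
Your part (2) is essentially the paper's own argument: apply the co-area formula of Theorem \ref{Coar} with $\phi=f$ and integrand $F(\nabla f)=F^*(df)$ so that $\int_M F^*(df)\,d\mu=\int_0^\infty\A_{\mathbf n}(f^{-1}(t))\,dt$, then use Sard to see that a.e.\ level set is a smooth separating hypersurface and observe that $\mathbf n$ is one of the two distinguished normals $\mathbf n_\pm$, hence $\A_{\mathbf n}(\Gamma_t)\ge\min\{\A_\pm(\Gamma_t)\}\ge \mathbbm{h}(M)\min\{\mu(\Omega(t)),\mu(M)-\mu(\Omega(t))\}$. The only genuine divergence is in part (1). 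The paper again invokes the co-area formula, first to obtain $\frac{d}{dt}\mu(\Omega(t))=-\int_{f^{-1}(t)}\frac{d\A_{\mathbf n}}{F(\nabla f)}$ and then, after an integration by parts $\int_0^\infty\mu(\Omega(t))\,dt=-\int_0^\infty t\,d\mu(\Omega(t))$, a second application of co-area to identify this with $\int_M f\,d\mu$. You bypass all of this with the layer-cake representation $f=\int_0^\infty\mathbf 1_{\{f\ge t\}}\,dt$ and Tonelli, which is more elementary and works for any nonnegative measurable $f$ without touching the Finsler structure or level-set regularity. Both routes are correct; yours is cleaner for (1), while the paper's keeps the whole lemma under the single co-area umbrella that it needs anyway for (2).
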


\begin{proof}Without loss of generality, we assume that $f$ is nonconstant. For almost every $t\in[\min f, \max f]$, $\Omega(t)$ is a domain
in $M$, with compact closure and smooth boundary.
Note that $\mathbf{n}:={\frac{\nabla f}{F(\nabla f)}}$ is a unit normal vector field along $\partial\Omega(t)$.

\noindent (1). It follows Theorem \ref{Coar} that
\[
\frac{d}{dt}\mu(\Omega(t))=-\int_{f^{-1}(t)}\frac{d\A_{\mathbf{n}}}{F(\nabla f)}.
\]
Thus, we have
\begin{align*}
\int^\infty_0\mu(\Omega(t))dt=-\int^\infty_0td\mu(\Omega(t))=\int^\infty_0tdt \int_{f^{-1}(t)}\frac{d\A_{\mathbf{n}}}{F(\nabla f)} =\int_Mf d\mu.
\end{align*}

\noindent (2).
Theorem \ref{Coar} now yields
\begin{align*}
\int_MF^*(df)d\mu=\int^\infty_{0}\A_{\mathbf{n}}(\partial\Omega(t))dt\geq {\mathbbm{h}}(M)\int^\infty_{0}\min\{\mu(\Omega(t)),\mu(M)-\mu(\Omega(t))\}dt.
\end{align*}

\end{proof}

\begin{proof}[\textbf{Proof of Theorem \ref{Th1}}]Given a smooth function $f$ on $M$, let $\alpha$ be a median of $f$, i.e.,
\[
\mu(\{x: f(x)\geq \alpha\})\geq \frac12\mu(M), \ \mu(\{x: f(x)\leq \alpha\})\geq \frac12\mu(M).
\]
Set $f_+:=\max\{f-\alpha,0\}$ and
$f_-:=\min\{f-\alpha,0\}$. By the definition of median, one can check that for any $t>0$,
\[
\mu(\{x: f_+^2(x)\geq t\})\leq \frac12\mu(M), \ \mu(\{x: f_-^2(x)\geq t\})\leq \frac12\mu(M).
\]
Thus, the above inequalities together with Lemma \ref{chele} yield
\begin{align*}
&{\mathbbm{h}}(M)\int_M|f-\alpha|^2d\mu={\mathbbm{h}}(M)\int_M(f^2_++f^2_-)d\mu\\
=&{\mathbbm{h}}(M)\left(\int_0^\infty\mu(\{x: f_+^2(x)\geq t\})dt+\int_0^\infty\mu(\{x: f_-^2(x)\geq t\})dt\right)\\
\leq& \int_MF^*(df^2_+)d\mu+\int_MF^*(df^2_-)d\mu=2\int_Mf_+F^*(df_+)+(-f_-)F^*(-df_-)d\mu\\
\leq &2\lambda \int_M|f-\alpha|F^*(df)d\mu\leq 2\lambda \left(\int_M|f-\alpha|^2d\mu\right)^{\frac12}\left(\int_MF^{*2}(df)d\mu\right)^{\frac12}.
\end{align*}
Hence,
\[
\int_MF^{*2}(df)d\mu\geq \frac{{\mathbbm{h}^2}}{4\lambda^2}\int_M|f-\alpha|^2d\mu.
\]
Since $\int_Mfd\mu=0$,
\[
\underset{\alpha\in \mathbb{R}}{\inf}\int_M|f-\alpha|^2d\mu \geq \int_Mf^2d\mu.
\]
\end{proof}

By a Croke type isoperimetric inequality\cite{YZ}, one has the following result. Also refer to \cite[Theorem 6.2, Prposition 6.4]{ZY} for a reversible version of the isoperimetric inequality.
\begin{theorem}[\cite{YZ,ZY}]\label{croke}Let $(M,F,d\mu)$ be a closed Finlser manifold with $\mathbf{Ric}\geq (n-1)k$, where $d\mu$ denotes either the Busemann-Hausdorff measure or the Holmes-Thompson measure. Then
\[
\mathbbm{h}(M)\geq \frac{(n-1)\mu(M)}{2\vol(\mathbb{S}^{n-2})\Lambda_F^{4n+\frac12}\diam (M)\int^{\diam (M)}_0\mathfrak{s}_k^{n-1}(t)dt},
\]
where $\diam(M)$ denotes the diameter of $M$.
\end{theorem}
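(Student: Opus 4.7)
The plan is to deduce the stated bound directly from the Finslerian Croke-type isoperimetric inequality proved in \cite{YZ}. That inequality furnishes, for every compact $(n-1)$-dimensional hypersurface $\Gamma$ separating $M$ into disjoint open submanifolds $D_1, D_2$ with common boundary $\Gamma$, a lower bound of the form
\[
\min\{\A_+(\Gamma),\A_-(\Gamma)\} \geq \frac{(n-1)\,\mu(D_1)\mu(D_2)}{\vol(\mathbb{S}^{n-2})\,\Lambda_F^{4n+\frac12}\,\diam(M)\int_0^{\diam(M)}\mathfrak{s}_k^{n-1}(t)\,dt}.
\]
Granted this, the remainder is a short manipulation: dividing both sides by $\min\{\mu(D_1),\mu(D_2)\}$ replaces the product $\mu(D_1)\mu(D_2)$ on the right by $\max\{\mu(D_1),\mu(D_2)\}$, and since $D_1, D_2$ partition $M$ up to a null set (the hypersurface $\Gamma$), one has $\max\{\mu(D_1),\mu(D_2)\}\geq \mu(M)/2$. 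Taking the infimum over all dividing hypersurfaces $\Gamma$ then yields the desired bound on $\mathbbm{h}(M)$.

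The real substance is therefore the isoperimetric inequality itself. Croke's classical idea, suitably adapted in \cite{YZ}, is to work with the product measure on $D_1\times D_2$ and exploit the fact that each minimizing geodesic joining a point of $D_1$ to a point of $D_2$ must cross $\Gamma$. Parametrizing such geodesics by a crossing point $x\in\Gamma$, a unit normal direction at $x$, and two arclength parameters, one rewrites $\mu(D_1)\mu(D_2)$ as an integral over $\Gamma$ weighted by a Jacobian. The Finsler Bishop-Gromov comparison theorem under $\mathbf{Ric}\geq(n-1)k$ (together with the hypothesis that $d\mu$ is either the Busemann-Hausdorff or Holmes-Thompson measure, which controls the S-curvature in the comparison) bounds this Jacobian by the model density $\mathfrak{s}_k^{n-1}(t)$, producing the factor $\int_0^{\diam(M)}\mathfrak{s}_k^{n-1}(t)\,dt$, while the remaining arclength parameter contributes one further factor of $\diam(M)$.

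The main obstacle in this adaptation is the non-reversibility of $F$. The area measures $d\A_+$ and $d\A_-$ can differ by a factor controlled by $\lambda_F$; the forward and backward exponential maps along normal directions to $\Gamma$ are distinct and have different volume distortions; and each invocation of Bishop-Gromov-type comparison requires converting between reference tangent vectors via the uniformity inequality $g_Y\leq \Lambda_F\, g_Z$. Arranging the chain of inequalities so as to end with a lower bound on $\min\{\A_+,\A_-\}$, rather than on some one-sided or averaged area, while absorbing every exchange of reference direction or forward/backward direction into a factor bounded by a power of $\Lambda_F$, is the delicate bookkeeping that accounts for the exponent $4n+\tfrac12$.
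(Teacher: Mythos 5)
The paper does not prove this theorem; it only cites it to \cite{YZ,ZY}, so there is no in-paper argument for your proposal to match or diverge from. That said, the elementary deduction you give from the postulated Yau/Croke-type isoperimetric inequality is correct: since $\mu(D_1)\mu(D_2)/\min\{\mu(D_1),\mu(D_2)\}=\max\{\mu(D_1),\mu(D_2)\}$ and $\mu(D_1)+\mu(D_2)=\mu(M)$ (the hypersurface $\Gamma$ being a null set), one indeed gets $\max\{\mu(D_1),\mu(D_2)\}\geq\mu(M)/2$, and taking the infimum over $\Gamma$ produces exactly the stated bound, factor of $2$ included. So the bookkeeping is right, granted the isoperimetric inequality.

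The substance, as you correctly flag, lies entirely in establishing that isoperimetric inequality; your sketch of the Croke/Yau integral-geometric argument (product measure on $D_1\times D_2$, crossing geodesics parametrized through $\Gamma$, Bishop--Gromov control of the Jacobian via $\mathbf{Ric}\geq(n-1)k$ together with the choice of Busemann--Hausdorff or Holmes--Thompson measure, careful handling of non-reversibility and uniformity constants) is the right approach in spirit. However, since \cite{YZ} is cited only as a preprint and neither that reference nor this paper supplies the details, your derivation of the exponent $4n+\tfrac12$ and of the precise constant $(n-1)/\vol(\mathbb{S}^{n-2})$ remains a heuristic rather than a verified proof; you have correctly identified where those factors would come from, but the proposal does not (and, without \cite{YZ}, cannot) nail them down. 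As a reconstruction of the intended argument it is sound; as a self-contained proof it has the unverified isoperimetric lemma as a genuine gap.
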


Theorem \ref{croke} together with Theorem \ref{Th1} now yields a Finslerian version of Yau's lower estimate for the first eigenvalue\cite{Y}.
\begin{theorem}\label{Yau}
Let $(M,F,d\mu)$ be a closed Finlser manifold with $\mathbf{Ric}\geq (n-1)k$, where $d\mu$ denotes either the Busemann-Hausdorff measure or the Holmes-Thompson measure. Then
\[
\lambda_1(M)\geq \left(\frac{(n-1)\mu(M)}{4\vol(\mathbb{S}^{n-2})\Lambda_F^{4n+1}\diam (M)\int^{\diam (M)}_0\mathfrak{s}_k^{n-1}(t)dt}\right)^2.
\]
That is, $\lambda_1(M)$, of a closed Finsler manifold, can be bounded from below in terms of the diameter, volume, uniform constant and a lower bound for the Ricci curvature.
\end{theorem}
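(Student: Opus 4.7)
The plan is to derive the bound as a direct corollary of the two main results already in this section: the Cheeger type inequality of Theorem~\ref{Th1} and the Croke type isoperimetric estimate of Theorem~\ref{croke}. Both ingredients are in hand, so the argument reduces to composing them and then tidying up the powers of $\lambda_F$ and $\Lambda_F$.

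First I would apply Theorem~\ref{Th1} with the sharp choice $\lambda = \lambda_F$, giving
\[
\lambda_1(M) \geq \frac{\mathbbm{h}^2(M)}{4\lambda_F^2}.
\]
Next I would square the Croke-type lower bound on $\mathbbm{h}(M)$ supplied by Theorem~\ref{croke} and substitute it into the right hand side. After substitution the denominator (inside the square) contains the product $\lambda_F \cdot \Lambda_F^{4n+1/2}$, together with the geometric factors $2\vol(\mathbb{S}^{n-2})\diam(M)\int_0^{\diam(M)}\mathfrak{s}_k^{n-1}(t)\,dt$; the remaining task is to merge $\lambda_F$ into a single power of $\Lambda_F$.

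This is where the universal comparison $\Lambda_F \geq \lambda_F^2$ from Section~2 does all of the work: taking square roots yields $\lambda_F \leq \Lambda_F^{1/2}$, whence
\[
\lambda_F \cdot \Lambda_F^{4n+1/2} \leq \Lambda_F^{4n+1},
\]
and the displayed inequality follows at once. The second assertion of the theorem is then merely a restatement, since every quantity on the right depends only on $n$, $k$, $\diam(M)$, $\mu(M)$, and $\Lambda_F$. There is no genuine obstacle here; the only subtle point is recognizing that the factor $\lambda_F$ produced by the Cheeger inequality can be absorbed cleanly into the $\Lambda_F$-exponent produced by the isoperimetric inequality, and that this is exactly what the universal estimate $\lambda_F^2 \leq \Lambda_F$ permits.
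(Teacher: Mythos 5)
Your argument is exactly the paper's: the authors also derive Theorem~\ref{Yau} by feeding the Croke-type bound of Theorem~\ref{croke} into the Cheeger inequality of Theorem~\ref{Th1}, then absorbing $\lambda_F$ into the $\Lambda_F$-exponent via $\lambda_F^2 \leq \Lambda_F$. Your bookkeeping of the powers is correct, so there is nothing to add.
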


\section{Volume comparison}
In this section, we will study the properties of the polar coordinate system of a Finsler manifold, which is useful to show Theorem \ref{Th2}. Refer to \cite{Sh2,ZY} for more details.

Let $(M,F,d\mu)$ be a forward complete Finsler $n$-manifold. In the rest of this paper, we always assume that $d\mu$ is either the Busemann-Hausdorff measure or the Homles-Thompson measure. Given $p\in M$, denote by $(r,y)=(r,\theta^\alpha)$, $1\leq \alpha\leq n$, the polar coordinates about $p$. Express
\[
d\mu=\hat{\sigma}(r,y)dr\wedge d\nu_p(y),
\]
where $d\nu_p$ is the measure on $S_pM$ induced by $F$.

\begin{lemma}\label{le2}
Let $(M,F,d\mu)$ be as above. If $\mathbf{Ric}\geq (n-1)k$ ($k\leq 0$)
and $\Lambda_F\leq \Lambda$, then
\begin{align*}
(1)\ &{\hat{\sigma}(\min\{i_y,r\},y)}\geq \Lambda^{-2n}\frac{\A_{n,k}(r)}{\V_{n,k}(R)-\V_{n,k}(r)}{\int_r^R\hat{\sigma}(\min\{i_y,t\},y)dt}, \ \forall\, 0<r\leq R;\\
(2)\ &{\int_{r_0}^{r_1}\hat{\sigma}(\min\{i_y,t\},y)dt}\geq   \Lambda^{-2n}\frac{\V_{n,k}(r_1)-\V_{n,k}(r_0)}{\V_{n,k}(r_2)-\V_{n,k}(r_1)}{\int_{r_1}^{r_2}\hat{\sigma}(\min\{i_y,t\},y)dt}, \\ &\forall\, 0<r_0<r_1<r_2;\\
(3)\ &{\int_0^r\hat{\sigma}(\min\{i_y,t\},y)dt}\geq
\Lambda^{-2n}\frac{\V_{n,k}(r)}{\V_{n,k}(R)}{\int_0^R\hat{\sigma}(\min\{i_y,t\},y)dt},\ \forall\, 0<r\leq R.
\end{align*}
Here, $\V_{n,k}(r)$ (resp. $\A_{n,k}(r)$) is the volume (resp. area) of ball (resp. sphere) with radius $r$ in the Riemannian space form of constant curvature $k$, that is,
\[
\A_{n,k}(r)=\vol(\mathbb{S}^{n-1})\mathfrak{s}^{n-1}_k(r),\ \V_{n,k}(r)=\vol(\mathbb{S}^{n-1})\int^r_0\mathfrak{s}^{n-1}_k(t)dt.
\]
\end{lemma}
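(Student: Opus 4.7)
The plan is to first establish a pointwise Bishop--Gromov type comparison for $\hat\sigma(r,y)$ along each geodesic ray issuing from $p$, and then deduce (1), (2), (3) by successive integration arguments.

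\textbf{Step 1 (Pointwise comparison).} Along each ray $\gamma_y(r)=\exp_p(ry)$ with $y\in S_pM$, I would combine the Jacobi--field computation of $\hat\sigma(r,y)$ (as developed in \cite{Sh2,ZY}) with $\mathbf{Ric}\ge(n-1)k$ and $\Lambda_F\le\Lambda$. For the Busemann--Hausdorff or Holmes--Thompson measure, $\Lambda_F\le\Lambda$ forces the distortion $\tau$ to be controlled by a multiple of $\log\Lambda$, which in turn bounds the S--curvature contribution in the Riccati equation for $\hat\sigma$. A Sturm comparison then yields, for $0<r_1\le r_2\le i_y$,
\[
\frac{\hat\sigma(r_2,y)}{\mathfrak{s}_k^{n-1}(r_2)}\le \Lambda^{2n}\,\frac{\hat\sigma(r_1,y)}{\mathfrak{s}_k^{n-1}(r_1)}.
\]
Since $k\le0$, $\mathfrak{s}_k^{n-1}$ is non-decreasing, so capping the first argument at $i_y$ preserves the inequality for all $r_2\ge r_1$, legitimising the use of $\min\{i_y,\cdot\}$.

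\textbf{Step 2 (Proof of (1)).} Fix $r\in(0,R]$ and integrate the pointwise inequality over $t\in[r,R]$:
\[
\int_r^R\hat\sigma(\min\{i_y,t\},y)\,dt\le\Lambda^{2n}\,\frac{\hat\sigma(\min\{i_y,r\},y)}{\mathfrak{s}_k^{n-1}(r)}\int_r^R\mathfrak{s}_k^{n-1}(t)\,dt.
\]
Multiplying by $\vol(\mathbb{S}^{n-1})/\vol(\mathbb{S}^{n-1})$ and recognising $\A_{n,k}(r)=\vol(\mathbb{S}^{n-1})\mathfrak{s}_k^{n-1}(r)$ together with $\V_{n,k}(R)-\V_{n,k}(r)=\vol(\mathbb{S}^{n-1})\int_r^R\mathfrak{s}_k^{n-1}(t)\,dt$, rearranging gives (1).

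\textbf{Step 3 (Proof of (2) and (3)).} Statement (2) follows from a standard monotone--average fact: if $\phi(t)/\psi(t)$ is non-increasing up to a multiplicative factor $\Lambda^{2n}$ with $\psi>0$, then the interval averages $\int_a^b\phi\,dt\big/\int_a^b\psi\,dt$ are also non-increasing up to the same factor as the interval slides to the right. Applying this with $\phi(t)=\hat\sigma(\min\{i_y,t\},y)$ and $\psi(t)=\mathfrak{s}_k^{n-1}(t)$ to the successive intervals $[r_0,r_1]$ and $[r_1,r_2]$, then multiplying through by $\vol(\mathbb{S}^{n-1})$, produces (2). Statement (3) is obtained from (2) by letting $r_0\to 0^+$, using that $\V_{n,k}(r_0)\to 0$ and $\int_0^{r_0}\hat\sigma(\min\{i_y,t\},y)\,dt\to 0$ so that the boundary contributions vanish.

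\textbf{Main obstacle.} The delicate step is Step~1: the Finsler Jacobi equation carries an S--curvature term which is not bounded by $\mathbf{Ric}$ alone, and pinning down the precise exponent $\Lambda^{2n}$ requires a careful bookkeeping of how $\Lambda_F\le\Lambda$ controls $\tau$ along $\gamma_y$ (for BH and HT measures) and how that translates, after integration of the Riccati inequality and exponentiation, into the multiplicative constant on the ratio $\hat\sigma/\mathfrak{s}_k^{n-1}$. Extending the estimate through the cut locus via the $\min\{i_y,\cdot\}$ device is a standard but also subtle point that must be checked to ensure no extra factor is lost past $i_y$.
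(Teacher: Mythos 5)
Your proposal follows essentially the same route as the paper: the heart of the matter is the pointwise Bishop--Gromov monotonicity of $\hat\sigma_p(r,y)/\bigl(e^{-\tau(\gamma_y(r))}\mathfrak{s}_k^{n-1}(r)\bigr)$ from \cite{Sh2,ZY} (which extends across the cut locus because $\mathfrak{s}_k^{n-1}$ is non-decreasing for $k\le 0$), combined with the two-sided distortion bound $\Lambda^{-n}\le e^{-\tau}\le\Lambda^n$, which together give exactly your pointwise inequality with the factor $\Lambda^{2n}$; (1) and (2) then follow by the integration arguments you describe. Two small remarks: the paper does not re-run the Riccati/Sturm analysis you flag as the ``main obstacle'' --- it simply quotes the weighted monotonicity from \cite{Sh2,ZY} and then bounds $e^{-\tau}$ at the two endpoints, so the S-curvature never needs to be controlled separately --- and for (3) the paper invokes Gromov's lemma directly, whereas letting $r_0\to0^+$ in your (2) only yields $\int_0^r\hat\sigma\ge\Lambda^{-2n}\frac{\V_{n,k}(r)}{\V_{n,k}(R)-\V_{n,k}(r)}\int_r^R\hat\sigma$; this does imply (3), but it takes one more line (add $\int_0^r\hat\sigma\cdot\V_{n,k}(r)\ge\Lambda^{-2n}\V_{n,k}(r)\int_0^r\hat\sigma$, using $\Lambda\ge1$, to both sides), which your ``boundary contributions vanish'' phrasing glosses over.
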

\begin{proof}It is easy to check that $\Lambda^{-n}\leq e^{-\tau(y)}\leq \Lambda^n$, for all $y\in SM$.
By \cite{Sh2,ZY}, for each $y\in S_pM$, we have
\[
\frac{\partial}{\partial r}\left(\frac{\hat{\sigma}_p(r,y)}{e^{-\tau(\gamma_y(r))}\mathfrak{s}^{n-1}_k(r)}\right)\leq 0, \ 0<r<i_y,
\]
which implies
\[
\frac{\partial}{\partial
r}\left(\frac{\hat{\sigma}_p(\min\{r,i_y\},y)}{e^{-\tau(\gamma_y(\min\{r,i_y\}))}\mathfrak{s}^{n-1}_k(r)}\right)\leq
0,\ \text{a.e. } r>0.
\]

Hence,
\[
\frac{\hat{\sigma}_p(\min\{r,i_y\},y)}{\hat{\sigma}_p(\min\{R,i_y\},y)}\geq
\frac{e^{-\tau(\gamma_y(\min\{r,i_y\}))}\mathfrak{s}^{n-1}_k(r)}{e^{-\tau(\gamma_y(\min\{R,i_y\}))}\mathfrak{s}^{n-1}_k(R)}\geq
\Lambda^{-2n}\frac{\mathfrak{s}^{n-1}_k(r)}{\mathfrak{s}^{n-1}_k(R)},\
0< r\leq R.
\]
Then (1), (2) follows. And (3) follows from Gromov's lemma\cite[Lemma 3.1]{Ch3}.
\end{proof}

Note that $d\mathfrak{A}_+|_{(r,y)}:=\hat{\sigma}(r,y)d\nu_p(y)$ is the measure on $S^+_p(r)$ induced by $\nabla r$. Then we have the following result.
\begin{lemma}\label{le3}
Let $i: \Gamma \hookrightarrow  M$ be a hypersurface. If the reversibility $\lambda_F\leq \lambda$, then
$d\A_{\pm}|_{(r,y)}\geq \lambda^{-1}d\mathfrak{A}_+|_{(r,y)}$, for any point $(r,y)=x\in  \Gamma$ ($r>0$).
\end{lemma}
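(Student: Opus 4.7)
My plan is to work in polar coordinates centered at $p$ near $x$ and reduce the comparison to a one-dimensional covector--vector pairing involving $\partial_r$. Fix polar coordinates $(r,\theta^1,\ldots,\theta^{n-1})$ near $x\in\Gamma$; then $d\mu=\hat\sigma(r,y)\,dr\wedge d\nu_p(y)$, and $F(\partial_r)=1$ since $\partial_r$ is the velocity of a unit-speed radial geodesic. Assuming $\Gamma$ is transverse to $\partial_r$ at $x$ (the degenerate case $\partial_r\in T_x\Gamma$ lies in a nowhere-dense subset of $\Gamma$ and follows by continuity), parametrize $\Gamma$ locally as a graph $r=r(\theta)$; writing $r_\alpha:=\partial r/\partial\theta^\alpha$, $T_x\Gamma$ is then spanned by $v_\alpha:=\partial_{\theta^\alpha}+r_\alpha\partial_r$ while $T_x S_p^+(r)$ is spanned by $w_\alpha:=\partial_{\theta^\alpha}$, and the two bases correspond under the shared $\theta$-parametrization of $\Gamma$ and $S_p^+(r)$.

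The main step is a ``change of normal vector'' identity
\[
d\A_\pm(v_1,\ldots,v_{n-1})=\frac{1}{\omega_\pm(\partial_r)}\,d\mathfrak{A}_+(w_1,\ldots,w_{n-1}).
\]
To derive it, decompose $\partial_r=\omega_\pm(\partial_r)\,\mathbf{n}_\pm+\eta_\pm$ with $\eta_\pm\in T_x\Gamma$; this is legitimate because $\omega_\pm$ annihilates $T_x\Gamma$ and $\omega_\pm(\mathbf{n}_\pm)=1$. Substituting into $d\A_\pm(v_1,\ldots,v_{n-1})=d\mu(\mathbf{n}_\pm,v_1,\ldots,v_{n-1})$, the $\eta_\pm$-contribution vanishes by antisymmetry, since $\eta_\pm,v_1,\ldots,v_{n-1}$ are $n$ vectors lying in the $(n-1)$-dimensional space $T_x\Gamma$. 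This gives $d\A_\pm(v)=\omega_\pm(\partial_r)^{-1}d\mu(\partial_r,v_1,\ldots,v_{n-1})$; writing $v_\alpha=w_\alpha+r_\alpha\partial_r$ and discarding terms with repeated $\partial_r$ further reduces the right-hand side to $\omega_\pm(\partial_r)^{-1}d\mathfrak{A}_+(w_1,\ldots,w_{n-1})$, as claimed.

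All that remains is the bound $|\omega_\pm(\partial_r)|\le\lambda$, which is immediate from the standard $F^*$--$F$ duality together with reversibility: $\omega_\pm(\partial_r)\le F^*(\omega_\pm)F(\partial_r)=1$, and $-\omega_\pm(\partial_r)=\omega_\pm(-\partial_r)\le F^*(\omega_\pm)F(-\partial_r)\le\lambda_F\le\lambda$. Combined with the identity above, this yields $d\A_\pm\ge\lambda^{-1}d\mathfrak{A}_+$ at $(r,y)$. The heart of the argument is the decomposition $\partial_r=\omega_\pm(\partial_r)\mathbf{n}_\pm+\eta_\pm$: once in hand, the area comparison collapses to controlling the single scalar $\omega_\pm(\partial_r)$, on which reversibility acts cleanly via $F(-v)\le\lambda_F F(v)$.
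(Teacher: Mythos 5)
Your proof is correct and follows essentially the same route as the paper's one-line argument: you decompose the radial direction against the unit normal $\mathbf{n}_\pm$, so the ratio of the two area densities collapses to the scalar $\omega_\pm(\partial_r)=g_{\mathbf{n}_\pm}(\mathbf{n}_\pm,\nabla r)$, which is then bounded in absolute value by $\lambda$ via $F^*$--$F$ duality and reversibility. The paper phrases it as $d\mathfrak{A}_+=|i^*(\nabla r\rfloor d\mu)|=|g_{\mathbf{n}}(\mathbf{n},\nabla r)|\,d\A_{\mathbf{n}}\leq\lambda\,d\A_{\mathbf{n}}$, while you spell out the graph parametrization and the decomposition $\partial_r=\omega_\pm(\partial_r)\mathbf{n}_\pm+\eta_\pm$ explicitly; these are the same computation read in opposite directions.
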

\begin{proof} Let
$\textbf{n}$ denote a unit normal vector field on $\Gamma$. Thus,
\[
d\mathfrak{A}_+=|i^*(\nabla r\rfloor d\mu)|=|g_{\textbf{n}}(\textbf{n},\nabla r)|d\A_{\textbf{n}}\leq \lambda d\A_{\textbf{n}}.
\]
\end{proof}

\section{A Buser type isoperimetric inequality for starlike domains}

In this section, we will extend Buser's isoperimetric inequality\cite[Lemma 5.1]{B}
 to Finsler setting. However, the original method of Buser's cannot be used directly, since the Finsler metrics considered here can be nonreversible. To overcome this difficulty, we introduce the "reverse" of a Finsler metric.  The {\it reverse} of a Finsler metric $F$ is defined by $\tilde{F}(y):=F(-y)$.

A direct calculation yields the following two lemmas.
\begin{lemma}\label{rele1}
For each $y\neq 0$,  we have
\[
\tilde{G}^i(y)=G^i(-y),\, \widetilde{{\mathbf{Ric}}}(y)=\mathbf{Ric}(-y),
\]
where $\tilde{G}^i$ (resp. $G^i$) is the spray of $\tilde{F}$ (resp.
$F$) and $\widetilde{{\mathbf{Ric}}}$ (resp. $\mathbf{Ric}$) is the Ricci
curvature of $\tilde{F}$ (resp. $F$). Hence, if $\gamma$ is a
geodesic of $F$, then the reverse of $\gamma$ is a geodesic of
$\tilde{F}$.
\end{lemma}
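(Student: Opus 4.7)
The plan is to reduce everything to a single chain-rule observation: under the substitution $y\mapsto -y$, derivatives in the base coordinates $x^i$ are unchanged while derivatives in the fiber coordinates $y^i$ pick up a sign. Since $\tilde F^2(x,y)=F^2(x,-y)$, differentiating twice in $y$ kills the signs, giving $\tilde g_{ij}(x,y)=g_{ij}(x,-y)$ and consequently $\tilde g^{ij}(x,y)=g^{ij}(x,-y)$. I would state this identification as the first lemma-internal step.

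Next I would substitute into the formula for the geodesic coefficients
\[
\tilde G^{i}(x,y)=\tfrac{1}{4}\tilde g^{il}(x,y)\Bigl\{2\tfrac{\partial \tilde g_{jl}}{\partial x^{k}}(x,y)-\tfrac{\partial \tilde g_{jk}}{\partial x^{l}}(x,y)\Bigr\}y^{j}y^{k}.
\]
The $x$-derivatives commute with the reflection, so every $\tilde g$ in the braces evaluates to the corresponding $g$ at $(x,-y)$; and the tensor $y^{j}y^{k}$ equals $(-y)^{j}(-y)^{k}$, producing no extra sign. This immediately identifies the right-hand side with $G^{i}(x,-y)$.

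For the curvature, I would compute the four partials of $\tilde G^{i}$ needed in the formula for $R^{i}{}_{k}$ by applying the chain rule to $\tilde G^{i}(y)=G^{i}(-y)$: $\partial_{x^{k}}\tilde G^{i}(y)=\partial_{x^{k}}G^{i}(-y)$, $\partial_{y^{k}}\tilde G^{i}(y)=-\partial_{y^{k}}G^{i}(-y)$, and the mixed/second-order versions follow accordingly. Substituting term by term into
\[
\tilde R^{i}{}_{k}(y)=2\partial_{x^{k}}\tilde G^{i}-y^{j}\partial^{2}_{x^{j}y^{k}}\tilde G^{i}+2\tilde G^{j}\partial^{2}_{y^{j}y^{k}}\tilde G^{i}-\partial_{y^{j}}\tilde G^{i}\partial_{y^{k}}\tilde G^{j},
\]
each of the four terms acquires an even number of signs: the single $y^{j}$ in the second term is compensated by the single $y$-derivative, and the two $y$-derivatives in the fourth term give two sign flips that cancel. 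Hence $\tilde R^{i}{}_{k}(y)=R^{i}{}_{k}(-y)$, and tracing on $i=k$ yields $\widetilde{\mathbf{Ric}}(y)=\mathbf{Ric}(-y)$.

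Finally, for the geodesic statement I would set $\bar\gamma(t):=\gamma(-t)$ so that $\dot{\bar\gamma}(t)=-\dot\gamma(-t)$ and $\ddot{\bar\gamma}(t)=\ddot\gamma(-t)$, and then check
\[
\ddot{\bar\gamma}^{i}(t)+2\tilde G^{i}(\dot{\bar\gamma}(t))=\ddot\gamma^{i}(-t)+2G^{i}(-\dot{\bar\gamma}(t))=\ddot\gamma^{i}(-t)+2G^{i}(\dot\gamma(-t))=0,
\]
using $\tilde G^{i}(y)=G^{i}(-y)$. There is no real obstacle beyond careful sign bookkeeping; the only point to watch is the homogeneity argument for the spray substitution, which is why I would treat the $\tilde g_{ij}$ identity as a named intermediate step before invoking it in both the spray and curvature calculations.
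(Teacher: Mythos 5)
Your argument is correct and is precisely the ``direct calculation'' the paper alludes to without spelling out: chain-rule identities $\tilde g_{ij}(x,y)=g_{ij}(x,-y)$, sign bookkeeping in the spray and $R^{i}{}_{k}$ formulas, and the time-reversal check for geodesics. Nothing to add; the paper omits this verification entirely.
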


\begin{lemma}\label{rele2}
Let $(M,F)$ be a Finsler manifold. Then
$d\tilde{\mu}=d\mu$,
where $d\tilde{\mu}$ (resp. $d\mu$) denotes the Busemann-Hausdorff
measure or the Holmes-Thompson measure of $\tilde{F}$ (resp. $F$).
\end{lemma}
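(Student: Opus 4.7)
The plan is to handle the two choices of measure separately, and in each case trace through how the defining integral transforms under the reversal $y \mapsto -y$ on each tangent space. The key preliminary observation is that if we set $\tilde{B}_xM := \{y \in T_xM : \tilde{F}(x,y)<1\}$, then by the very definition $\tilde{F}(x,y)=F(x,-y)$ we have $\tilde{B}_xM = -B_xM$, i.e.\ $\tilde{B}_xM$ is just the reflection of $B_xM$ through the origin of $T_xM$.

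For the Busemann--Hausdorff measure this is immediate: the Lebesgue volume on $T_xM$ (with respect to the coordinate basis $\partial/\partial y^i$) is invariant under $y \mapsto -y$, so $\vol(\tilde{B}_xM)=\vol(-B_xM)=\vol(B_xM)$, and the defining formula
\[
d\mu_{BH} = \frac{\vol(\mathbb{B}^n)}{\vol(B_xM)} dx^1\wedge\cdots\wedge dx^n
\]
gives $d\tilde{\mu}_{BH}=d\mu_{BH}$ term by term. No further work is needed here.

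For the Holmes--Thompson measure I first need the relationship between the fundamental tensors. Differentiating $\tilde{F}^2(x,y)=F^2(x,-y)$ twice in $y$ and applying the chain rule produces
\[
\tilde{g}_{ij}(x,y) = \frac{1}{2}\frac{\partial^2 F^2(x,-y)}{\partial y^i\partial y^j} = g_{ij}(x,-y),
\]
so in particular $\det \tilde{g}_{ij}(x,y) = \det g_{ij}(x,-y)$. Substituting into the defining integral and performing the linear change of variables $z=-y$ on $T_xM$ (whose Jacobian is $\pm 1$, preserving the absolute value of the Lebesgue form),
\[
\int_{\tilde{B}_xM}\det\tilde{g}_{ij}(x,y)dy = \int_{-B_xM}\det g_{ij}(x,-y)dy = \int_{B_xM}\det g_{ij}(x,z)dz,
\]
so the $dx$-coefficient defining $d\tilde{\mu}_{HT}$ coincides with that of $d\mu_{HT}$.

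There is no real obstacle here; the content is just that both constructions depend only on the \emph{unoriented} indicatrix and on the absolute value of the Lebesgue form on each tangent space, both of which are manifestly invariant under $y\mapsto -y$. The only small care required is the chain-rule computation giving $\tilde{g}_{ij}(x,y)=g_{ij}(x,-y)$, which I would present as a one-line identity before invoking the change of variables.
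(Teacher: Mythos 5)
Your proof is correct. The paper itself offers no written argument for this lemma — it merely says "A direct calculation yields the following two lemmas" — and your computation is exactly the direct verification that justifies this remark: the chain-rule identity $\tilde{g}_{ij}(x,y)=g_{ij}(x,-y)$, the observation $\tilde{B}_xM=-B_xM$, and the fact that the Lebesgue density on each $T_xM$ is invariant under $y\mapsto -y$ together give $d\tilde\mu_{BH}=d\mu_{BH}$ and $d\tilde\mu_{HT}=d\mu_{HT}$ immediately.
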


\begin{corollary}\label{reco}
Let $(M,F,d\mu)$ be a Finsler manifold and $\Gamma$ be a smooth
hypersurface embedded in $M$. Thus $\tilde{\A}_\pm(\Gamma)=\A_\mp(\Gamma)$,
where $d\tilde{\A}$ (resp. $d\A$) denote the induced measure on $\Gamma$ by $d\tilde{\mu}$ (resp. $d\mu$).
\end{corollary}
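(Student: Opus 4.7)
The plan is to track how the data defining $\textbf{n}_\pm$ transforms under the reversal $F\mapsto\tilde F$, and then apply Lemma~\ref{rele2}. First, from $\tilde F(y)=F(-y)$ and the substitution $X\mapsto -X$ in the definition of $F^*$, I would record the dual identity $\tilde F^*(\eta)=F^*(-\eta)$. This immediately tells me that the two $\tilde F^*$-unit $1$-forms annihilating $T\Gamma$ are precisely the elements of $\{-\omega_+,-\omega_-\}$.

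Second, I would compare the Legendre transforms. Differentiating $\tilde F^2(y)=F^2(-y)$ twice in $y$ yields $\tilde g_{ij}(y)=g_{ij}(-y)$, hence $\tilde g_X(X,\cdot)=g_{-X}(X,\cdot)=-g_{-X}(-X,\cdot)$, so
\[
\tilde{\mathfrak L}(X)=-\mathfrak L(-X),\qquad\tilde{\mathfrak L}^{-1}(\omega)=-\mathfrak L^{-1}(-\omega).
\]
To match the $\pm$ labels between $F$ and $\tilde F$ --- so that $\textbf{n}_\pm$ and $\tilde{\textbf{n}}_\pm$ denote the unit normals on the same side of $\Gamma$ --- I would use $\omega_+(\textbf{n}_+)=F^2(\textbf{n}_+)>0$ to track which side is which: the only consistent choice is $\tilde\omega_\pm=-\omega_\mp$. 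Substituting into the formula for $\tilde{\mathfrak L}^{-1}$ then gives $\tilde{\textbf{n}}_\pm=-\textbf{n}_\mp$.

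Finally, Lemma~\ref{rele2} supplies $d\tilde\mu=d\mu$, so the definition of the induced area measure yields
\[
d\tilde{\A}_\pm=i^*\bigl(\tilde{\textbf{n}}_\pm\rfloor d\tilde\mu\bigr)=i^*\bigl((-\textbf{n}_\mp)\rfloor d\mu\bigr)=-\,d\A_\mp,
\]
which, as positive area measures, gives the desired $\tilde{\A}_\pm(\Gamma)=\A_\mp(\Gamma)$. The main obstacle is the bookkeeping in the second step: since the Legendre transform is only positively homogeneous and $\omega_-\neq -\omega_+$ in general, one cannot simply declare $\tilde\omega_\pm=-\omega_\pm$; the ``outward side'' convention must be pinned down consistently for $F$ and $\tilde F$ before the label matching closes. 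Once that is settled, the rest is a direct substitution.
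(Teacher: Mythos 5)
Your proof is correct and follows the same route as the paper: establish $\tilde{\textbf{n}}_\pm=-\textbf{n}_\mp$ and then invoke Lemma~\ref{rele2}. The paper leaves the identity $\tilde{\textbf{n}}_\pm=-\textbf{n}_\mp$ as ``easy to check''; your proposal supplies exactly that verification, via $\tilde F^*(\eta)=F^*(-\eta)$, $\tilde{\mathfrak L}^{-1}(\omega)=-\mathfrak L^{-1}(-\omega)$, and the side-matching $\tilde\omega_\pm=-\omega_\mp$.
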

\begin{proof}
Let $\textbf{n}_\pm$ (resp. $\tilde{\textbf{n}}_\pm$) be the unit normal vector along $\Gamma$
in $(M,F)$ (resp. $(M,\tilde{F})$). It is easy to check that $\tilde{\textbf{n}}_\pm=-\textbf{n}_\mp$.
Then we are done by Lemma \ref{rele2}.
\end{proof}

From above, we obtain the following key lemma.
\begin{lemma}\label{infact}
Let $D$ be a star-like domain (with respect to $p$) in $M$
with $B^+_p(r)\subset D\subset B^+_p(R)$. Let $\Gamma$ be a smooth hypersurface embedded in $D$ which divides $D$ into disjoint open sets $D_1$, $D_2$ in $D$ with common boundary $\partial D_1=\partial D_2=\Gamma$. Suppose $\mathbf{Ric}\geq (n-1)k$ ($k<0$) and $\Lambda_F\leq \Lambda$. If $\mu(D_1\cap
B^+_p(r/(2\sqrt{\Lambda})))\leq \frac12 \mu(B^+_p(r/(2\sqrt{\Lambda})))$, then
\begin{align*}
\frac{\A_\pm(\Gamma)}{\mu(D_1)}\geq \underset{0<\beta<\frac{r}{2\sqrt{\Lambda}}}{\max}\left\{\frac{\A_{n,k}(\beta)\left[\V_{n,k}\left(\frac{r}{2\sqrt{\Lambda}}\right)-\V_{n,k}(\beta)\right]}{2\Lambda^{4n+\frac12}\V_{n,k}(r)\V_{n,k}(R)}\right\},
\end{align*}
\end{lemma}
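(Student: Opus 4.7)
The plan is to work in polar coordinates $(r,y)$ about $p$, in which $d\mu = \hat{\sigma}(r,y)\,dr\,d\nu_p(y)$, and to adapt Buser's ray-decomposition argument from \cite{B} to the Finsler setting. The Finslerian input comes from three sources: the volume comparison Lemma \ref{le2}, the area comparison Lemma \ref{le3}, and the reverse-metric identities Lemmas \ref{rele1}, \ref{rele2} and Corollary \ref{reco}, which will let me treat $\A_+$ and $\A_-$ by symmetric arguments.

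First I would fix a parameter $\beta \in (0, r/(2\sqrt{\Lambda}))$ and, for each direction $y \in S_pM$, record how the ray $\gamma_y$ meets $D_1$, $D_2$ and $\Gamma$. Define the subsets of $S_pM$
$$U := \{y : \gamma_y([0,\beta]) \cap D_1 \neq \emptyset\}, \qquad W := \{y : \gamma_y([\beta, r/(2\sqrt{\Lambda}))) \cap D_2 \neq \emptyset\}.$$
For $y \in U \cap W$ the ray must cross $\Gamma$ at some radius $\tau(y) \in (0, r/(2\sqrt{\Lambda}))$. Using Lemma \ref{le3} and the radial decomposition $d\mathfrak{A}_+|_{(r,y)} = \hat{\sigma}(r,y)\,d\nu_p(y)$ at the first such crossing produces
$$\A_+(\Gamma) \geq \lambda^{-1}\int_{U \cap W} \hat{\sigma}(\tau(y),y)\,d\nu_p(y),$$
and the analogous bound for $\A_-(\Gamma)$ follows by rerunning the same radial argument with respect to the reverse metric $\tilde F$ (whose Ricci bound and distortion are preserved by Lemmas \ref{rele1}, \ref{rele2}) and then translating back by Corollary \ref{reco}. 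Since $\lambda_F^2 \leq \Lambda_F \leq \Lambda$, the factor $\lambda^{-1}$ is absorbed into the power of $\Lambda$ appearing in the final constant.

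Next I would estimate $\mu(D_1)$ from above, starting from $\mu(D_1) \leq \int_{S_pM}\int_0^R \hat{\sigma}(t,y)\,dt\,d\nu_p(y)$ and using Lemma \ref{le2}(3) to replace the inner integral by a uniform multiple of $\V_{n,k}(R)$. Then I would compare $\hat{\sigma}(\tau(y),y)$ with the radial tail $\int_\tau^{r/(2\sqrt{\Lambda})}\hat{\sigma}(t,y)\,dt$ via Lemma \ref{le2}(1), which introduces exactly the ratio $\A_{n,k}(\beta)/[\V_{n,k}(r/(2\sqrt{\Lambda})) - \V_{n,k}(\beta)]$ after using monotonicity of $\mathfrak{s}_k^{n-1}$ to pass from $\tau(y)$ down to $\beta$. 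The remaining step is to bound $\nu_p(U \cap W)$ from below using the volume hypothesis: for $y \notin U$ the initial segment of length $\beta$ is entirely in $D_2$, while for $y \notin W$ the annulus between $\beta$ and $r/(2\sqrt{\Lambda})$ is entirely in $D_1$, so both complementary cases contribute to $\mu(D_1 \cap B^+_p(r/(2\sqrt{\Lambda})))$ or to $\mu(B^+_p(r/(2\sqrt{\Lambda}))) - \mu(D_1 \cap B^+_p(r/(2\sqrt{\Lambda})))$ in a way that, combined with the half-volume hypothesis, forces $\nu_p(U \cap W)$ (suitably $\hat{\sigma}$-weighted) to be an appreciable fraction of $\nu_p(S_pM)$. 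Assembling everything and taking the supremum over $\beta$ should yield the stated inequality.

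The main obstacle I anticipate is the last step: turning the single scalar volume hypothesis into a lower bound on the weighted measure of $U \cap W$ that is clean enough to produce exactly the combination $\A_{n,k}(\beta)[\V_{n,k}(r/(2\sqrt{\Lambda})) - \V_{n,k}(\beta)]/[\V_{n,k}(r)\V_{n,k}(R)]$ with constant $2$ and exponent $4n+\tfrac12$ on $\Lambda$. The bookkeeping of which invocation of Lemma \ref{le2} produces which factor of $\Lambda^{2n}$, together with the $\sqrt{\Lambda}$ arising from $\lambda_F \leq \sqrt{\Lambda_F}$ in the $\A_-$ half via Corollary \ref{reco}, is what dictates the precise exponent. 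A secondary subtlety is that the ``first crossing'' radius $\tau(y)$ may lie in the cut locus for a measure-zero set of directions, which is handled by restricting to directions where $\beta < i_y$ (allowed since $\beta < r/(2\sqrt{\Lambda}) \leq r \leq i_y$ on $B^+_p(r)$) and using the a.e. version of the monotonicity formula in Lemma \ref{le2}.
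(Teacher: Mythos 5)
There is a genuine gap, and it is not merely the ``bookkeeping'' obstacle you flag at the end — the radial-from-$p$ decomposition you propose cannot, on its own, prove the lemma.

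Your construction bounds $\A_\pm(\Gamma)$ from below by an integral of $\hat\sigma(\tau(y),y)$ over $U\cap W$, where $U$ consists of directions whose initial segment of length $\beta$ meets $D_1$, and $W$ of directions whose segment between $\beta$ and $r/(2\sqrt\Lambda)$ meets $D_2$. But take $D_1$ to be a tiny ball of radius $\epsilon$ centered at a point $q$ with $d(p,q)\approx r/(4\sqrt\Lambda)$, and let $D_2$ be the rest of $D$. For any $\beta < r/(4\sqrt\Lambda)-\epsilon$, no radial segment of length $\beta$ reaches $D_1$, so $U=\emptyset$ and your lower bound for $\A_\pm(\Gamma)$ degenerates to $0$ — even though the inequality to be proved is true (indeed, easy) in this example, your method does not detect any area of $\Gamma$ at all. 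The hypothesis $\mu(D_1\cap B^+_p(r/(2\sqrt\Lambda)))\le \tfrac12\mu(B^+_p(r/(2\sqrt\Lambda)))$ is satisfied, yet $\nu_p(U\cap W)$ is zero, so there is no hope of deducing a lower bound on the weighted measure of $U\cap W$ from the volume hypothesis alone. The issue is structural: any argument that only sees $\Gamma$ through radial rays emanating from $p$ is blind to configurations where $D_1$ sits well inside the small ball but away from $p$.

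The paper deals with precisely this by splitting into two regimes governed by an auxiliary parameter $\alpha\in(0,1)$. If $\mu(D_1\cap B^+_p(r/(2\sqrt\Lambda)))\le\alpha\mu(D_1)$, most of $D_1$ lies outside the small ball, and a radial argument from $p$ (tracking, for each point in the outer part of $D_1$, the last crossing $q^*$ of $\Gamma$ along the segment $\gamma_{pq}$, and controlling the ``rods'' $\{(t,y):\beta\le t\le\rho\}$ that stay inside $D_1$ via the smallness of $\mu(D_1\cap B^+_p(r/(2\sqrt\Lambda)))$) yields the $\mathscr{A}$-type bound. If instead $\mu(D_1\cap B^+_p(r/(2\sqrt\Lambda)))\ge\alpha\mu(D_1)$ — which covers the blob case above — the paper abandons the fixed center $p$ entirely and runs a Croke/Buser product-space argument on $W_1\times W_2$ with $W_i=D_i\cap B^+_p(r/(2\sqrt\Lambda))$: a.e.\ pair $(q,w)$ is joined by a minimal geodesic contained in $B^+_p(r)$ which must cross $\Gamma$, and Fubini selects a good base point $w_2\in W_2$ (or $q_1\in W_1$) from which to do the polar decomposition. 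Only in this second regime, and specifically in Case II where the roles of endpoints are swapped, does the reverse-metric machinery (Lemmas \ref{rele1}, \ref{rele2}, Corollary \ref{reco}) do real work — it is not, as your sketch suggests, a routine device to cover $\A_-$ alongside $\A_+$ in a single radial argument. Your proposal is essentially a fragment of the first regime; without the product argument, the proof cannot close.
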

\begin{proof}For convenience, set $B(\rho):=B^+_p(\rho)$, for any $\rho>0$.
Clearly, $\mu(D_1\cap B(r/(2\sqrt{\Lambda})))\leq \mu(D_2\cap
B(r/(2\sqrt{\Lambda})))$. Let $\alpha\in (0,1)$ be a constant which
will be chosen later. 

\noindent \textbf{Step 1}: Suppose $\mu(D_1\cap
B(r/(2\sqrt{\Lambda})))\leq \alpha \mu(D_1)$.

For each $q\in
D_1-\text{Cut}_p$, set $q^*$ is the last point on the minimal
geodesic segment $\gamma_{pq}$ from $p$ to $q$, where this ray intersects
$\Gamma$. If the whole segment $\gamma_{pq}$ is contained in $D_1$, set
$q^*:=p$.

Fix a positive number $\beta\in (0,r/(2\sqrt{\Lambda}))$. Let $(t,y)$ denote the polar coordinate system about $p$. Given a point $q=(\rho,y)\in D_1-\text{Cut}_p-B(r/(2\sqrt{\Lambda}))$, set
\[
\text{rod}(q):=\{(t,y): \beta\leq t\leq \rho\}.
\]
Define
\begin{align*}
&\mathcal {D}^1_1:=\{q\in D_1-\text{Cut}_p-\overline{B(r/(2\sqrt{\Lambda}))}: q^* \notin B(\beta)\};\\
&\mathcal {D}^2_1:=\{q\in D_1-\text{Cut}_p-\overline{B(r/(2\sqrt{\Lambda}))}: \text{rod}(q)\subset D_1\};\\
&\mathcal {D}^3_1:=\{q\in B(r/(2\sqrt{\Lambda}))-\overline{B(\beta)}: \exists\, x\in \mathcal {D}^2_1, \text{such that } q\in \text{rod}(x)\}.
\end{align*}
By Lemma \ref{le2}, we obtain that
\[
\frac{\mu(\mathcal {D}^3_1)}{\mu(\mathcal {D}^2_1)}\geq \Lambda^{-2n}\frac{\V_{n,k}(r/(2\sqrt{\Lambda}))-\V_{n,k}(\beta)}{\V_{n,k}(R)-\V_{n,k}(r/(2\sqrt{\Lambda}))}=:\gamma^{-1}.
\]
It follows from the assumption that
\[
(1-\alpha)\mu(D_1)\leq \mu(D_1-B(r/(2\sqrt{\Lambda}))) , \ \mu(\mathcal {D}^3_1)\leq \mu(D_1\cap B(r/(2\sqrt{\Lambda})))\leq \alpha \mu(D_1).
\]
Note that $D_1-B(r/(2\sqrt{\Lambda}))\subset \mathcal {D}^1_1 \cup \mathcal {D}^2_1$. From above, we have
\begin{align*}
(1-\alpha)\mu(D_1)\leq \mu(\mathcal {D}^2_1)+\mu(\mathcal {D}^1_1)\leq \gamma \alpha \mu(D_1)+\mu(\mathcal {D}^1_1).\tag{5.1}\label{5.1}
\end{align*}

Set $\mathscr{D}_1^1:=\{y\in S_pM: \,\exists\, t>0, \text{ such that } (t,y)\in \mathcal {D}^1_1\}$.
Clearly,
\[
\mu(\mathcal {D}^1_1)=\int_{\mathscr{D}_1^1}d\nu_p(y)\int_{r/(2\sqrt{\Lambda})}^{\min\{R, i_y\}}\chi_{\mathcal {D}^1_1}(\exp_p(ty))\cdot\hat{\sigma}(t,y)dt,
\]
where
\[
\chi_{\mathcal {D}^1_1}(x)=
\left \{
\begin{array}{lll}
&1, &x\in \mathcal {D}^1_1,\\
\\
&0, &x\notin \mathcal {D}^1_1.
\end{array}
\right.
\]
Given $y\in \mathscr{D}^1_1$, we can write
\[
\int_{r/(2\sqrt{\Lambda})}^{\min\{R, i_y\}}\chi_{\mathcal {D}^1_1}(\exp_p(ty))\cdot\hat{\sigma}(t,y)dt=\underset{j_y}{\sum}\int_{a_{j_y}}^{b_{j_y}}\hat{\sigma}(t,y)dt,
\]
where $\exp_p(b_{j_y}y)\in \Gamma$ and $\exp_p(a_{j_y}y)\in \Gamma$ if $a_{j_y}>r/(2\sqrt{\Lambda})$.

Set
\[
c_{j_y}:=\left \{
\begin{array}{lll}
&a_{j_y}, &a_{j_y}>r/(2\sqrt{\Lambda});\\
\\
&F\left(\exp^{-1}_p\left(\left(\exp_p\frac{r}{2\sqrt{\Lambda}}y\right)^*\right)\right),
&a_{j_y}=r/(2\sqrt{\Lambda}).
\end{array}
\right.
\]
Thus, $\beta\leq c_{j_y}\leq a_{j_y}$ and $\exp_p(c_{j_y}y)\in \Gamma$. Lemma \ref{le2} then yields
\begin{align*}
&\underset{j_y}{\sum}\int_{a_{j_y}}^{b_{j_y}}\hat{\sigma}(t,y)dt\leq \underset{j_y}{\sum}\int_{c_{j_y}}^{b_{j_y}}\hat{\sigma}(t,y)dt\leq \Lambda^{2n}\underset{j_y}{\sum}\frac{\V_{n,k}(b_{j_y})-\V_{n,k}(c_{j_y})}{\A_{n,k}(c_{j_y})}\hat{\sigma}(c_{j_y},y)\\
\leq & \Lambda^{2n}\frac{\V_{n,k}(R)-\V_{n,k}(\beta)}{\A_{n,k}(\beta)}\underset{j_y}{\sum}\hat{\sigma}(c_{j_y},y).
\end{align*}
The inequality above together with Lemma \ref{le3} yields
\begin{align*}
\mu(\mathcal {D}^1_1)&\leq \Lambda^{2n}\frac{\V_{n,k}(R)-\V_{n,k}(\beta)}{\A_{n,k}(\beta)}\int_{\mathscr{D}_1^1}\underset{j_y}{\sum}\hat{\sigma}(c_{j_y},y)d\nu_p(y)\\
&\leq \Lambda^{2n+\frac12}\frac{\V_{n,k}(R)-\V_{n,k}(\beta)}{\A_{n,k}(\beta)}\A_\pm(\Gamma). \tag{5.2}\label{5.2}
\end{align*}

Combining (\ref{5.1}) and (\ref{5.2}), we obtain
\begin{align*}
\frac{\A_{\pm}(\Gamma)}{\mu(D_1)}=\frac{\A_{\pm}(\Gamma)}{\mu(\mathcal {D}_1^1)}\frac{\mu(\mathcal {D}_1^1)}{\mu(D_1)}\geq \Lambda^{-(2n+\frac12)}(1-\alpha(1+\gamma))\frac{\A_{n,k}(\beta)}{\V_{n,k}(R)-\V_{n,k}(\beta)}.
\end{align*}

\noindent\textbf{Step 2}: Suppose $\mu(D_1\cap B
({r}/({2\sqrt{\Lambda}})))\geq \alpha \mu(D_1)$.

For simplicity, set
$W_i:=D_i\cap B({r}/({2\sqrt{\Lambda}}))$, $i=1,2$. Now we consider the
product space $W_1\times W_2$ with the product measure
$d\mu_{\times}:=d\mu \times d\mu$. Let
\[
N:=\{(q,w)\in W_1\times W_2:\, q\in \text{Cut}_w \text{ or } w\in \text{Cut}_q\}.
\]
Then Fubini's theorem together with \cite[Lemma 8.5.4]{BCS} yields that
\begin{align*}
\mu_{\times}(N)\leq \int_{w\in W_1}d\mu(w)\int_{q\in W_2\cap \text{Cut}_w} d\mu(q)+\int_{q\in W_2}d\mu(q)\int_{w\in W_1\cap \text{Cut}_q} d\mu(w)=0.
\end{align*}
Hence, for each $(q,w)\in (W_1\times W_2)\backslash N$, there exists a unique minimal geodesic $\gamma_{wq}$ from $w$ to $q$ with the length $L_F(\gamma_{wq})\leq r$.

We claim that $\gamma_{wq}$ is contained in $B(r)$.
In fact, if $\gamma_{wq}\cap S^+_p(r)=\{w_1,q_1\}$ (which may coincide), then
\begin{align*}
d(w,w_1)\geq d(p,w_1)-d(p,w)> \left(1-\frac{1}{2\sqrt{\Lambda}}\right)r,\ d(q_1, q)>\frac{1}{\sqrt{\Lambda}}\left(1-\frac{1}{2\sqrt{\Lambda}}\right)r.
\end{align*}
Hence,
$L_F(\gamma_{wq})\geq d(w,w_1)+d(q_1,q)>r$,
which is a contradiction!

 Since $\gamma_{wq}$ is contained in $B(r)$, it must intersect $\Gamma$. Denote by $q^*$ the last point on $\gamma_{wq}$ where $\gamma_{wq}$ intersects $\Gamma$.


Define
\begin{align*}
V_1&:=\{(q,w)\in W_1\times W_2-N:\, d(w,q^*)\geq d(q^*,q)\},\\
V_2&:=\{(q,w)\in W_1\times W_2-N:\, d(w,q^*)\leq d(q^*,q)\},
\end{align*}
where $q^*$ is defined as above.
Since $\mu_\times(V_1\cup V_2)=\mu_{\times}(W_1\times W_2)$, we have
\[
\mu_{\times}(V_1)\geq \frac12 \mu_\times(W_1\times W_2) \text{ or } \mu_{\times}(V_2)\geq \frac12 \mu_\times(W_1\times W_2).
\]


\noindent \textbf{Case I}: Suppose that $\mu_{\times}(V_1)\geq \frac12 \mu_\times(W_1\times W_2)$.

Note that
\begin{align*}
\mu_\times(V_1)=\int_{w\in W_2}d\mu\int_{\{q\in W_1:d(w,q^*)\geq d(q^*,q)\}-\text{Cut}_w} d\mu.
\end{align*}
Thus, there exist a point $w_2\in W_2$ and a measurable set $U_1\subset W_1$ such that

\noindent(1) For each $q\in U_1$, $d(w_2, q^*)\geq d(q^*, q)$ and $(q,w_2)\notin N$.

\noindent(2) $\mu(U_1)\geq \frac12 \mu(W_1)$;

Let $(t,y)$ denote the polar coordinates about $w_2$. For $q=(\rho, y)\in U_1$, set $q^*=:(\rho^*,y)$. Since
$\rho^*=d(w,q^*)\geq d(q^*,q)=\rho-\rho^*$, $\rho^*\geq \rho/2$. Set
$\rho^{**}:=\sup\{s: \exp_{w_2}(ty),\,t\in [\rho^*,s],
\text{ is contained in
} U_1-\text{Cut}_{w_2}\}$. Then $\tilde{q}:=(\rho^{**},y) \in
\overline{B({r}/({2\sqrt{\Lambda}}))}$, which implies
\[
\rho^{**}=d(w_2, \tilde{q})\leq d(w_2, p)+d(p, \tilde{q})< \frac{r}{2}+\frac{r}{2\sqrt{\Lambda}}\leq r.
\]
Since $(\tilde{q})^*=q^*$, $\rho^*\geq \rho^{**}/2$. Lemma \ref{le2} then yields
\begin{align*}
\frac{\hat{\sigma}(\rho^*,y)}{\int_{\rho^*}^{\rho^{**}}\hat{\sigma}(t,y)dt}&\geq \Lambda^{-2n}\frac{\A_{n,k}(\rho^*)}{\V_{n,k}(\rho^{**})-\V_{n,k}(\rho^{*})}\geq \Lambda^{-2n}\frac{\A_{n,k}(\rho^{**}/2)}{\V_{n,k}(\rho^{**})-\V_{n,k}(\rho^{**}/2)}\\
&\geq \Lambda^{-2n}\frac{\A_{n,k}(r/2)}{\V_{n,k}(r)-\V_{n,k}(r/2)}.
\end{align*}

Lemma \ref{le3} now yields that
\[
d \A_\pm|_{(\rho^*,y)}\geq
\Lambda^{-(2n+\frac12)}\frac{\A_{n,k}(r/2)}{\V_{n,k}(r)-\V_{n,k}(r/2)}\left({\int_{\rho^*}^{\rho^{**}}\hat{\sigma}(t,y)dt}\right)d\nu_{w_2}(y).
\]
Hence,
\[
\A_\pm(\Gamma)\geq \A_\pm(\Gamma\cap B({r}/({2\sqrt{\Lambda}})))\geq \Lambda^{-(2n+\frac12)}\frac{\A_{n,k}(r/2)}{\V_{n,k}(r)-\V_{n,k}(r/2)}\,\mu(U_1).
\]
By assumption, we have
\[
\alpha \mu(D_1)\leq \mu(D_1\cap B({r}/({2\sqrt{\Lambda}})))=\mu(W_1)\leq 2\mu(U_1),
\]
which implies
\[
\frac{\A_\pm(\Gamma)}{\mu(D_1)}\geq \frac{\alpha}{2\Lambda^{2n+\frac12}}\frac{\A_{n,k}(r/2)}{\V_{n,k}(r)-\V_{n,k}(r/2)}.
\]

\noindent \textbf{Case II}: Suppose that $\mu_{\times}(V_2)\geq \frac12 \mu_\times(W_1\times W_2)$.

Then Fubini's theorem yields that there exist a point $q_1\in W_1$ and a measurable set $U_2\subset W_2$ such that

\noindent(1) For each $w\in U_2$, $d(w, q_1^*)\leq d(q_1^*, q_1)$ and $(q_1,w)\notin N$.

\noindent(2) $\mu(U_2)\geq \frac12 \mu(W_2)$;

It should be noticeable that $q^*_1$ is dependent on the choice of $w$.
Let $w^\sharp $ denote the first point on $\gamma_{wq_1}$ where the segment intersects $\Gamma$. Thus, for each $w\in U_2$,
\[
d(w,w^\sharp )\leq d(w,q_1^*)\leq d(q_1^*,q_1)\leq d(w^\sharp ,q_1).
\]

Let $\tilde{F}$ denote the reverse of $F$. It follows from Lemma \ref{rele1} that the reverse of the geodesic $\gamma_{wq_1}$ is a minimal geodesic $\tilde{\gamma}_{q_1w}$ from $q_1$ to $w$ in $(M,\tilde{F})$. Note that $w^\sharp $ is the last point on $\tilde{\gamma}_{q_1w}$ where $\tilde{\gamma}_{q_1w}$ intersects $\Gamma$. Let $\widetilde{N}$ be defined as $N$ in $(M,\tilde{F})$. It is easy to see that $\widetilde{N}=N$. Denote by $\tilde{d}$ the metric induced by $\tilde{F}$.
Thus, $ {U}_2\subset W_2$ satisfies

\noindent(1) For each $w\in {U}_2$, $\tilde{d}(q_1,w^\sharp)\geq \tilde{d}(w^\sharp,w)$ and $(q_1,w)\notin \widetilde{N}$.

\noindent(2) $\tilde{\mu}({U}_2)\geq \frac12 \tilde{\mu
}(W_2)$;

Note that Lemma \ref{rele1} also implies that $\widetilde{\mathbf{Ric}}\geq (n-1)k$.
A similar argument to the one in Case I together with Lemma \ref{rele2} and Corollary \ref{reco} yields that
\[
\A_\mp(\Gamma)\geq \Lambda^{-(2n+\frac12)}\frac{\A_{n,k}(r/2)}{\V_{n,k}(r)-\V_{n,k}(r/2)}\mu( {U}_2).
\]
By assumption, we have
\[
\alpha \mu(D_1)\leq \mu(D_1\cap B({r}/({2\sqrt{\Lambda}})))\leq \mu(D_2\cap B({r}/({2\sqrt{\Lambda}})))=\mu(W_2)\leq 2\mu({U}_2).
\]
Hence,
\[
\frac{\A_\pm(\Gamma)}{\mu(D_1)}\geq \frac{\alpha}{2\Lambda^{2n+\frac12}}\frac{\A_{n,k}(r/2)}{\V_{n,k}(r)-\V_{n,k}(r/2)}.
\]

\noindent\textbf{Step 3}: From above, we obtain
\[
\frac{\A_\pm(\Gamma)}{\mu(D_1)}\geq  \left\{
\begin{array}{lll}
&\frac{1-\alpha(1+\Lambda^{2n}\mathscr{C})}{\Lambda^{2n+\frac12}}\mathscr{A}, &\mu(D_1\cap B ({r}/({2\sqrt{\Lambda}})))\leq \alpha \mu(D_1);\\
\\
&\frac{\alpha}{2\Lambda^{2n+\frac12}}\mathscr{B}, &\mu(D_1\cap B ({r}/({2\sqrt{\Lambda}})))\geq \alpha \mu(D_1),
\end{array}
\right.
\]
where
\[
\mathscr{A}:=\frac{\A_{n,k}(\beta)}{\V_{n,k}(R)-\V_{n,k}(\beta)},\ \mathscr{B}:=\frac{\A_{n,k}(r/2)}{\V_{n,k}(r)-\V_{n,k}(r/2)}, \ \mathscr{C}:=\frac{\V_{n,k}(R)-\V_{n,k}(r/(2\sqrt{\Lambda}))}{\V_{n,k}(r/(2\sqrt{\Lambda}))-\V_{n,k}(\beta)}.
\]
To obtain the best possible bound, we set
\[
\frac{1-\alpha(1+\Lambda^{2n}\mathscr{C})}{\Lambda^{2n+\frac12}}\mathscr{A}=\frac{\alpha}{2\Lambda^{2n+\frac12}}\mathscr{B}.
\]
Thus,
\[
\alpha=\frac{2 \mathscr{A}}{\mathscr{B}+2\mathscr{A}(1+\Lambda^{2n}\mathscr{C})}.
\]
An easy calculation then yields
\[
\frac{\A_\pm(\Gamma)}{\mu(D_1)}\geq
\frac{\A_{n,k}(\beta)\left[\V_{n,k}\left(\frac{r}{2\sqrt{\Lambda}}\right)-\V_{n,k}(\beta)\right]}{2\Lambda^{4n+\frac12}\V_{n,k}(r)\V_{n,k}(R)}.
\]
\end{proof}

Since $\frac{\A_\pm(\Gamma)}{\min\{\mu(D_1),\mu(D_2)\}}\geq\frac{\A_\pm(\Gamma)}{\mu(D_1)}$, we have the following Finslerian version of Buser's isoperimetric inequality\cite[Theorem 6.8]{Ch3}.
\begin{corollary}\label{imle}
Let $D$ be a star-like domain (with respect to $p$) in $M$
with $B^+_p(r)\subset D\subset B^+_p(R)$.  If
$\mathbf{Ric}\geq (n-1)k$ ($k<0$) and $\Lambda_F\leq \Lambda$, then
\begin{align*}
h(D)&:=\underset{\Gamma}{\inf}\frac{\min\{\A_\pm(\Gamma)\}}{\min\{\mu(D_1),\mu(D_2)\}}\\
&\geq \underset{0<\beta<\frac{r}{2\sqrt{\Lambda}}}{\max}\left\{\frac{\A_{n,k}(\beta)\left[\V_{n,k}\left(\frac{r}{2\sqrt{\Lambda}}\right)-\V_{n,k}(\beta)\right]}{2\Lambda^{4n+\frac12}\V_{n,k}(r)\V_{n,k}(R)}\right\},
\end{align*}
where $\Gamma$ varies over smooth hypersurfaces
 in $D$ satisfying

\noindent (1) $\overline{\Gamma}$ is embedded in $\overline{D}$;

\noindent (2) $\Gamma$ divides $D$ into disjoint open sets $D_1$, $D_2$ in $D$ with common boundary $\partial D_1=\partial D_2=\Gamma$.
\end{corollary}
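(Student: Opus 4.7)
The inequality is set up to fall out of Lemma \ref{infact} almost immediately. Given any admissible hypersurface $\Gamma$ splitting $D$ into $D_1,D_2$, my plan is: (i) relabel the two pieces so that the hypothesis of Lemma \ref{infact} is satisfied, (ii) trade the denominator $\mu(D_1)$ appearing there for $\min\{\mu(D_1),\mu(D_2)\}$, and (iii) take the supremum over $\beta$ and the infimum over $\Gamma$.

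For step (i), I note that $D_1\cap B^+_p(r/(2\sqrt{\Lambda}))$ and $D_2\cap B^+_p(r/(2\sqrt{\Lambda}))$ are disjoint open subsets whose union is all of $B^+_p(r/(2\sqrt{\Lambda}))$ up to the $\mu$-null set $\Gamma$. Hence at least one of them has $\mu$-measure at most $\tfrac12\mu(B^+_p(r/(2\sqrt{\Lambda})))$; by swapping labels if necessary, I may assume this is $D_1$. Because both the numerator $\min\{\A_\pm(\Gamma)\}$ and the denominator $\min\{\mu(D_1),\mu(D_2)\}$ in the Cheeger quotient are symmetric in $(D_1,D_2)$, this relabeling does not affect the quantity to be bounded.

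For step (ii), Lemma \ref{infact} now applies directly and yields
\[
\frac{\A_\pm(\Gamma)}{\mu(D_1)}\ \geq\ \max_{0<\beta<\frac{r}{2\sqrt{\Lambda}}}\frac{\A_{n,k}(\beta)\bigl[\V_{n,k}\bigl(\tfrac{r}{2\sqrt{\Lambda}}\bigr)-\V_{n,k}(\beta)\bigr]}{2\Lambda^{4n+\frac12}\V_{n,k}(r)\V_{n,k}(R)}.
\]
Taking the min over the two signs on the left and invoking the trivial inequality $\mu(D_1)\geq\min\{\mu(D_1),\mu(D_2)\}$ (equivalently, $\frac{1}{\mu(D_1)}\leq\frac{1}{\min\{\mu(D_1),\mu(D_2)\}}$), the same lower bound persists with $\mu(D_1)$ replaced by $\min\{\mu(D_1),\mu(D_2)\}$. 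Since the right-hand side is independent of $\Gamma$, taking the infimum over admissible $\Gamma$ in step (iii) delivers the stated estimate for $h(D)$.

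\emph{Main obstacle.} All of the geometric heavy lifting -- polar coordinates, the volume comparison of Lemma \ref{le2}, the pointwise area inequality $d\A_{\pm}\geq\lambda_F^{-1}d\mathfrak{A}_+$ from Lemma \ref{le3}, and especially the reverse-metric trick used to handle the nonreversibility in Case II (Lemmas \ref{rele1}, \ref{rele2}, Corollary \ref{reco}) -- has already been absorbed into Lemma \ref{infact}. The corollary itself is purely a symmetry and bookkeeping step: the only thing to verify carefully is that the labeling convention built into the lemma's hypothesis costs us no generality when we pass to the symmetric Cheeger quotient, and this is immediate from the argument above.
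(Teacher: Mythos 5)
Your proof is correct and takes essentially the same route as the paper. The paper's own justification is a single line — it simply observes that $\frac{\A_\pm(\Gamma)}{\min\{\mu(D_1),\mu(D_2)\}}\geq\frac{\A_\pm(\Gamma)}{\mu(D_1)}$ and cites Lemma \ref{infact} — leaving the relabeling step implicit; you make it explicit by noting that $D_1\cap B^+_p(r/(2\sqrt{\Lambda}))$ and $D_2\cap B^+_p(r/(2\sqrt{\Lambda}))$ partition the ball up to the null set $\Gamma$, so one piece can always be chosen to satisfy the lemma's half-measure hypothesis, and that the Cheeger quotient is symmetric under the swap. This is the same argument, carried out with more care than the paper itself offers.
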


\section{A Buser type inequality for Finsler manifolds}

Let $(\phi,\varphi):=\int_M\phi\varphi d\mu$. Then we have the following minimax principle.

\begin{lemma}\label{Coule}
Let $(M,F,d\mu)$ be a closed Finsler manifold with the reversibility $\lambda_F$ and let $D_1$, $D_2$ be pairwise disjoint normal domains (i.e., with compact closures and nonempty piecewise $C^\infty$ boundary) in $M$. Then
\[
\lambda_1(M)\leq\lambda^2_F \max\{\lambda_1(D_1),\, \lambda_1(D_2)\}.
\]
\end{lemma}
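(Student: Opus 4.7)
The plan is to test the Rayleigh quotient on the manifold $M$ against functions built from Dirichlet eigenfunctions on $D_1$ and $D_2$. Concretely, I would let $u_i$ be a first Dirichlet eigenfunction of $(D_i,F,d\mu)$, so that $u_i \in \mathscr{H}_0(D_i) = W^1_2(D_i) \cap \{f : f|_{\partial D_i}=0\}$ and $\int_{D_i} F^{*2}(du_i)\,d\mu = \lambda_1(D_i)\int_{D_i} u_i^2\,d\mu$. Extending each $u_i$ by zero outside $D_i$ puts both in $W^1_2(M)$, because $u_i$ vanishes on $\partial D_i$.

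Next I would look for constants $a_1,a_2$, not both zero, such that the linear combination $u:=a_1 u_1 + a_2 u_2$ satisfies $\int_M u\,d\mu = 0$, i.e.\ $a_1\int_{D_1}u_1\,d\mu + a_2\int_{D_2}u_2\,d\mu = 0$. This is a single homogeneous linear equation in $(a_1,a_2)$, so nontrivial solutions always exist; thus $u \in \mathscr{H}_0(M)\setminus\{0\}$ and it is a legal test function for $\lambda_1(M)$. The crucial feature I want to exploit is that because $D_1$ and $D_2$ are disjoint, $du$ equals $a_i\,du_i$ on $D_i$ and vanishes elsewhere, so the Rayleigh quotient splits cleanly:
\begin{align*}
\int_M F^{*2}(du)\,d\mu &= \int_{D_1}F^{*2}(a_1\,du_1)\,d\mu + \int_{D_2}F^{*2}(a_2\,du_2)\,d\mu,\\
\int_M u^2\,d\mu &= a_1^2\int_{D_1}u_1^2\,d\mu + a_2^2\int_{D_2}u_2^2\,d\mu.
\end{align*}

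The one genuinely Finslerian step, and the place where the factor $\lambda_F^2$ is forced on us, is handling the sign of $a_i$. Since $F^*$ is only positively homogeneous, $F^{*2}(a_i\,du_i) = a_i^2\,F^{*2}(du_i)$ when $a_i\ge 0$, but when $a_i<0$ we only get $F^{*2}(a_i\,du_i) = a_i^2\,F^{*2}(-du_i) \le a_i^2\lambda_F^2\,F^{*2}(du_i)$ by the definition of the reversibility (applied to $F^*$, which shares the reversibility constant). In either case $F^{*2}(a_i\,du_i) \le a_i^2\lambda_F^2\,F^{*2}(du_i)$, and at least one sign flip is unavoidable because both $u_i$ are positive inside $D_i$ (by the usual Harnack/maximum-principle argument for the first Dirichlet eigenfunction of Shen's Laplacian) while we require $\int_M u = 0$.

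Putting these bounds together, I expect
\[
\int_M F^{*2}(du)\,d\mu \le \lambda_F^2\bigl(a_1^2\lambda_1(D_1)\!\int_{D_1}\!u_1^2\,d\mu + a_2^2\lambda_1(D_2)\!\int_{D_2}\!u_2^2\,d\mu\bigr) \le \lambda_F^2\max\{\lambda_1(D_1),\lambda_1(D_2)\}\!\int_M u^2\,d\mu,
\]
which gives $\lambda_1(M)\le E(u)\le \lambda_F^2\max\{\lambda_1(D_1),\lambda_1(D_2)\}$. The only real obstacle is conceptual rather than technical: one must be certain that the constraint $\int_M u\,d\mu = 0$ cannot be met without flipping a sign on at least one $u_i$, so the $\lambda_F^2$ factor really is needed and cannot be improved by a smarter choice of $(a_1,a_2)$. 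Once that is noted, the remainder is a direct computation.
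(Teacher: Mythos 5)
Your proof takes essentially the same route as the paper: extend first Dirichlet eigenfunctions of $D_1$, $D_2$ by zero, form a mean-zero linear combination (a single linear constraint in two unknowns), plug it into the Rayleigh quotient, and absorb the sign ambiguity of the coefficients via $F^{*2}(-\eta)\le \lambda_F^2\,F^{*2}(\eta)$. Your closing worry is unnecessary: the bound $F^{*2}(a_i\,du_i)\le a_i^2\lambda_F^2\,F^{*2}(du_i)$ holds for either sign of $a_i$, so the inequality follows regardless of whether a sign flip actually occurs --- that question only bears on the sharpness of the factor $\lambda_F^2$, not on the validity of the estimate.
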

\begin{proof}
Suppose $\lambda_1(D_1)\leq \lambda_1(D_2)$. And let $\psi_i$ be the eigenfunction corresponding to $\lambda_1(D_i)$, $i=1,2$. We extend $\psi_i$ to $M$ by letting $\psi_i\equiv0$ on $M\backslash D_i$.

There exists $\alpha_1$, $\alpha_2$, not all equal to zero, satisfying
\[
\alpha_1(\psi_1,\phi)+\alpha_2(\psi_2,\phi)=0,
\]
where $\phi$ is a nonzero constant function on $M$. Set $f:=\alpha_1 \psi_1 +\alpha_2 \psi_2$. Thus, $(f,\phi)=0$ and $f\in \mathscr{H}_0(M)$. Hence,
\[
\lambda_1(M)\leq\frac{\int_MF^{*2}(df)d\mu}{\int_M f^2d\mu}\leq\lambda^2_F\frac{\int_{D_1}\alpha_1^2 F^{*2}(d\psi_1)d\mu+\int_{D_2}\alpha_2^2 F^{*2}(d\psi_2)d\mu}{\int_M (\alpha_1^2 \psi_1^2+\alpha_2^2 \psi_2^2) d\mu}\leq \lambda^2_F\lambda_1(D_2).
\]

\end{proof}

The following lemma is clear.
\begin{lemma}\label{conlem}
Let $(M,F,d\mu)$ be a closed Finsler manifold. Given a positive constant $C>0$, define a new Finsler metric $\widehat{F}$ by $\widehat{F}(y):=C^{\frac12} F(y)$. Then
\[
\widehat{\mathbf{Ric}}(y)=\frac1C \mathbf{Ric},\ \widehat{\lambda}_1(M)=\frac{1}{{C}}\lambda_1(M),\ \widehat{\mathbbm{h}}(M)=\frac{1}{\sqrt{C}} {\mathbbm{h}}(M).
\]
\end{lemma}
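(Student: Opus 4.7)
The lemma records three scaling identities, each a consequence of straightforward homogeneity bookkeeping. My plan is to track how the basic tensors transform under $\widehat F=C^{1/2}F$, then assemble the three conclusions.

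The starting observation is $\widehat g_{ij}=Cg_{ij}$ (from $\widehat F^{2}=CF^{2}$), hence $\widehat g^{ij}=C^{-1}g^{ij}$. Substituting into the formula for the spray shows $\widehat G^{i}=G^{i}$, so the quantities $R^{i}{}_{k}(y)$ entering $\mathbf{Ric}$ are unchanged as functions on $TM$. Since $R^{i}{}_{k}$ is $2$-homogeneous in $y$ and the Ricci bound is read on the unit sphere bundle, an $\widehat F$-unit vector $\widehat y$ is of the form $\widehat y=C^{-1/2}y$ with $y$ an $F$-unit vector, and
\[
\widehat{\mathbf{Ric}}(\widehat y)=R^{i}{}_{i}(\widehat y)=R^{i}{}_{i}(C^{-1/2}y)=C^{-1}\mathbf{Ric}(y),
\]
which is the first identity.

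Next I handle $\widehat F^{*}$ and $d\widehat\mu$. The sup-definition of the dual metric gives $\widehat F^{*}=C^{-1/2}F^{*}$, so $\widehat F^{*2}(df)=C^{-1}F^{*2}(df)$. For the Busemann--Hausdorff density I use $\widehat B_{x}M=C^{-1/2}B_{x}M$ to obtain $\widehat\sigma_{BH}=C^{n/2}\sigma_{BH}$; for Holmes--Thompson I change variables $y=C^{-1/2}z$ in the defining integral and combine $\det\widehat g_{ij}=C^{n}\det g_{ij}$ with the $0$-homogeneity of $\det g_{ij}$ in $y$ to get the same factor. Hence $d\widehat\mu=C^{n/2}d\mu$ in either case. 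Plugging both scalings into the Rayleigh quotient cancels the $C^{n/2}$ between numerator and denominator and leaves an overall $C^{-1}$, yielding $\widehat\lambda_{1}(M)=C^{-1}\lambda_{1}(M)$.

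For Cheeger's constant I need the scaling of $d\mathcal A_{\pm}$. From $\widehat g_{ij}=Cg_{ij}$ and the $1$-homogeneity of $\mathfrak L$ I get $\widehat{\mathfrak L}=C\mathfrak L$, so the $F^{*}=1$ defining covectors scale as $\widehat\omega_{\pm}=C^{1/2}\omega_{\pm}$ and the unit normals satisfy $\widehat{\mathbf n}_{\pm}=C^{-1/2}\mathbf n_{\pm}$. By linearity of the interior product,
\[
d\widehat{\mathcal A}_{\pm}=i^{*}(\widehat{\mathbf n}_{\pm}\rfloor d\widehat\mu)=C^{-1/2}\cdot C^{n/2}\,i^{*}(\mathbf n_{\pm}\rfloor d\mu)=C^{(n-1)/2}\,d\mathcal A_{\pm}.
\]
Substituting this together with $\widehat\mu=C^{n/2}\mu$ into the infimum defining $\widehat{\mathbbm h}(M)$ produces a net factor $C^{(n-1)/2-n/2}=C^{-1/2}$ in front of $\mathbbm h(M)$. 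There is no serious obstacle in the argument; the only point requiring care is that $\mathbf{Ric}$ is $2$-homogeneous in $y$, so passing the Ricci bound from $F$ to $\widehat F$ requires the identification of $\widehat F$-unit vectors as $C^{-1/2}$ times $F$-unit vectors.
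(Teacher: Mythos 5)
Your proof is correct. The paper offers no argument for this lemma (it is simply asserted to be ``clear''), so there is nothing to compare against; your verification is the expected direct homogeneity computation, and every step checks out: $\widehat g_{ij}=Cg_{ij}$ gives $\widehat G^i=G^i$, the $2$-homogeneity of $R^i{}_k$ then yields the Ricci scaling on the unit sphere bundle; $\widehat F^{*}=C^{-1/2}F^{*}$ together with $d\widehat\mu=C^{n/2}d\mu$ (for either BH or HT, both verified) gives the eigenvalue scaling after the $C^{n/2}$ cancels in the Rayleigh quotient; and $\widehat{\mathbf n}_\pm=C^{-1/2}\mathbf n_\pm$ plus the measure scaling gives $d\widehat{\mathcal A}_\pm=C^{(n-1)/2}d\mathcal A_\pm$, hence $\widehat{\mathbbm h}=C^{-1/2}\mathbbm h$. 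One minor remark: the covector scaling $\widehat\omega_\pm=C^{1/2}\omega_\pm$ follows directly from $\widehat F^{*}=C^{-1/2}F^{*}$ rather than from $\widehat{\mathfrak L}=C\,\mathfrak L$ as your phrasing suggests, but the conclusion and the subsequent use of the $1$-homogeneity of $\mathfrak L^{-1}$ are right.
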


\begin{proof}[\textbf{Proof of Theorem \ref{Th2}}]By Lemma \ref{conlem}, we can suppose that $\mathbf{Ric}\geq -(n-1)$, i.e., $\delta=1$.
Given any $\epsilon>0$, let $\Gamma$, $D_1$ and $D_2$ be as in Definition \ref{def2} such that $0\leq\mathscr{I}-{\mathbbm{h}}(M)<\epsilon$, where
\[
\mathscr{I}:=\frac{\min\{\A_\pm(\Gamma)\}}{\min\{\mu(D_1),\mu(D_2)\}}.
\]

\noindent \textbf{Step 1.}
For $n\geq 3$, $\Gamma$ may satisfy $\max_{p\in M}d(p,\Gamma)<\rho$, for any $\rho>0$ ("the problem of hair"\cite{B1}). Hence, we will find a new set $\tilde{\Gamma}$ to replace $\Gamma$.

Let $\mathscr{P}:=\{p_1,\ldots,p_k\}$ be a forward complete $r$-package in $M$, that is

\noindent(1) $d(p_i,p_j)\geq 2r$, for $i\neq j$;

\noindent(2) ${\cup}_{1\leq i\leq k}B^+_{p_i}(2r\sqrt{\Lambda})=M$.

For each $p_i\in \mathscr{P}$, the Dirichlet region of $p_i$ is defined by
\[
\mathscr{D}_i:=\{q\in M: \, d(p_i,q)\leq d(p_j,q), \text{for all }1\leq j\leq k\}.
\]
Property (1) and (2) imply that
\[
B^+_{p_i}(r/\sqrt{\Lambda})\subset \mathscr{D}_i \subset B^+_{p_i}(2r\sqrt{\Lambda}).
\]

 Lemma \ref{le2} yields
\[
\mu(\mathscr{D}_i)\leq \mu(B^+_{p_i}(2r\sqrt{\Lambda}))\leq \Lambda^{2n}\frac{\V_{n,-1}(2r\sqrt{\Lambda})}{\V_{n,-1}(r/(2\Lambda))}\mu(B^+_{p_i}(r/(2\Lambda))),\tag{6.1}\label{*1}
\]

Now it follows from Corollary \ref{imle} that for $0<r<\frac{1}{2\sqrt{\Lambda}}$,
\[
h(\mathscr{D}_i)\geq \frac{\A_{n,-1}\left(\frac{r}{4{\Lambda}}\right)\V_{n,-1}\left(\frac{r}{4{\Lambda}}\right)}{4\Lambda^{4n+\frac12}\V_{n,-1}\left(\frac{r}{\sqrt{\Lambda}}\right)\V_{n,-1}(2r\sqrt{\Lambda})}\geq \frac{1}{C_1(n,\Lambda)\,r}=:\mathscr{J}(r).\tag{6.2}\label{Es1}
\]
Here, $C_1(n,\Lambda)$ is a positive constant depending only on $n$ and $\Lambda$.
One can easily check that
\[
\frac{\mathscr{I}}{\mathscr{J}(r)}\left[\Lambda^{2n}\frac{\V_{n,-1}(4r\sqrt{\Lambda})}{\V_{n,-1}(r/(2\Lambda))}\right]<\frac18,\tag{6.3}\label{**}
\]
for
\[
0<r<\min\left\{\frac{1}{4\sqrt{\Lambda}},\frac{1}{C_2(n,\Lambda)\,\mathscr{I}}\right\}.
\]

\noindent\textbf{Claim}: For $i\neq j$,
\[
\mathscr{D}_i\cap \mathscr{D}_j=\{q\in M: d(p_i,q)=d(p_j,q)\leq d(p_s,q), \text{ for all } 1\leq s \leq k\}
\]
has measure zero (with respect to $d\mu$).

Note that $d(p_i,\cdot)$ and $d(p_j,\cdot)$ are smooth
on $\text{int}(\mathscr{D}_i\cap \mathscr{D}_j)-\text{Cut}_{p_i}\cup
\text{Cut}_{p_j}$. If $\nabla(d(p_i,x)-d(p_j,x))=0$ for some $x\in \text{int}(
\mathscr{D}_i\cap \mathscr{D}_j)-\text{Cut}_{p_i}\cup
\text{Cut}_{p_j}$, then $d(d(p_i,x)-d(p_j,x))=0$, that is,
$d(d(p_i,x))=d(d(p_j,x))$ and
\[
\nabla d(p_i,x)=\nabla d(p_j,x).
\]
This implies that the unique minimal geodesic from $p_i$ to $x$
overlaps the unique minimal geodesic from $p_j$ to $x$. Since
$d(p_i,x)=d(p_j,x)$, we have $p_i=p_j$, which is a contradiction!
Hence, $d(p_i,x)-d(p_j,x)$ is regular on $\text{int}(\mathscr{D}_i\cap
\mathscr{D}_j)-\text{Cut}_{p_i}\cup \text{Cut}_{p_j}$, which implies $
\text{dim}(\text{int}(\mathscr{D}_i\cap
\mathscr{D}_j)-\text{Cut}_{p_i}\cup \text{Cut}_{p_j})\leq (n-1)$. Therefore, $\text{int}(\mathscr{D}_i\cap
\mathscr{D}_j)=\emptyset$
and $\mathscr{D}_i\cap \mathscr{D}_j$ has measure zero. Thus,
\[
\mu(\mathscr{D}_i\cup \mathscr{D}_j)=\mu(\mathscr{D}_i)+\mu(\mathscr{D}_j).\tag{6.4}\label{*3}
\]

Enumerate the collection $\{p_i\}_{i=1}^k$ in such way that
\begin{align*}
&\mu\left(D_1\cap B^+_{p_i}\left(\frac{r}{2\Lambda}\right)\right)\leq \frac12 \mu\left(B^+_{p_i}\left(\frac{r}{2\Lambda}\right)\right), \text{ for } i=1,\cdots, m;\\
&\mu\left(D_1\cap B^+_{p_i}\left(\frac{r}{2\Lambda}\right)\right)> \frac12 \mu\left(B^+_{p_i}\left(\frac{r}{2\Lambda}\right)\right), \text{ for } i=m+1,\cdots, k.
\end{align*}
Lemma \ref{infact} implies that for $i=1,\cdots,m$,
\begin{align*}
\A_\pm(\Gamma\cap \text{int}(\mathscr{D}_i))\geq \mathscr{J}(r) \mu(D_1\cap \mathscr{D}_i).\tag{6.5}\label{*2}
\end{align*}
From (\ref{*1}), (\ref{*3}) and (\ref{*2}), we obtain
\begin{align*}
\overset{m}{\underset{i=1}{\sum}}\mu(D_1\cap \mathscr{D}_i)
\leq \frac{1}{\mathscr{J}(r)}\A_\pm(\Gamma),
\end{align*}
which together with (\ref{**}) yields that
\begin{align*}
&\overset{m}{\underset{i=1}{\sum}}\mu(D_1\cap \mathscr{D}_i)\leq
\frac{1}{\mathscr{J}(r)}\min\{\A_\pm(\Gamma)\}\\ =
&\frac{\mathscr{I}}{\mathscr{J}(r)}\min\{\mu(D_1),\mu(D_2)\} <
\frac14\Lambda^{-2n}\frac{\V_{n,-1}(r/(2\Lambda))}{\V_{n,-1}(2r\sqrt{\Lambda})}\mu(D_1).\tag{6.6}\label{*4}
\end{align*}

Note that $\{\mathscr{D}_i\}_{i=1}^k$ is a closed covering of $M$. Now we claim that
there exists a point $p_i\in \mathscr{P}$ such that
\[
\mu(D_1\cap B^+_{p_i}(r/(2\Lambda)))>\frac12\mu(B^+_{p_i}(r/(2\Lambda))).
\]
If not, then (\ref{*4}) together with (\ref{*3}) yields 
\begin{align*}
\mu(D_1)=\overset{k}{\underset{i=1}{\sum}}\mu(D_1\cap \mathscr{D}_i)<
\frac14\mu(D_1),
\end{align*}
which is a contradiction.

Likewise, there also exists a point $p_j\in \mathscr{P}$ such that
\[
\mu(D_2\cap B^+_{p_j}(r/(2\Lambda)))>\frac12\mu(B^+_{p_j}(r/(2\Lambda))).
\]
Thus, the following sets are not empty:
\begin{align*}
\tilde{D}_1&:=\left\{q\in M:\, \mu(D_1\cap B^+_q(r/(2\Lambda)))>\frac12\mu (B^+_q(r/(2\Lambda)))\right\};\\
\tilde{D}_2&:=\left\{q\in M:\, \mu(D_2\cap B^+_q(r/(2\Lambda)))>\frac12\mu (B^+_q(r/(2\Lambda)))\right\}.
\end{align*}

Since the continuity of the map $q\mapsto \mu(D_1\cap B^+_q(r/(2\Lambda)))-\mu(D_2\cap B^+_q(r/(2\Lambda)))$, the open submanifolds $\tilde{D}_1$ and $\tilde{D}_2$ are separated by the closed subset
\[
\tilde{\Gamma}:=\{q\in M: \mu(D_1\cap B^+_q(r/(2\Lambda)))=\mu(D_2\cap B^+_q(r/(2\Lambda)))\, \}.\tag{6.7}\label{*5}
\]

\noindent \textbf{Step 2.}
Define
\[
\tilde{\Gamma}^t:=\{q\in M:\, d(\tilde{\Gamma}, q)\leq t\}.
\]
Now choose a new forward complete $r$-package $\mathscr{Q}=\{q_1,\ldots, q_l\}$ in $M$ such that:

\noindent (1) $q_1,\ldots,q_s\in \tilde{\Gamma}$ and
$\tilde{\Gamma}\subset\cup_{1\leq i \leq
s}B^+_{q_i}(2r\sqrt{\Lambda})$;

\noindent (2) $q_{s+1},\ldots, q_{m}\in \tilde{D}_1$ and $q_{m+1},\ldots, q_l\in \tilde{D}_2$.

Since $\tilde{\Gamma}^t\subset \cup_{1\leq i\leq s} B^+_{q_i}(2r\sqrt{\Lambda}+t)$ and $q_1,\ldots,q_s\in \tilde{\Gamma}$, by (\ref{*5}) and Lemma \ref{le2}, we have
\begin{align*}
\mu(\tilde{\Gamma}^t)&\leq \mu(\overset{s}{\underset{i=1}{\cup}} B^+_{q_i}(2r\sqrt{\Lambda}+t))\leq \overset{s}{\underset{i=1}{\sum}}\mu(B^+_{q_i}(2r\sqrt{\Lambda}+t))\\
&\leq\Lambda^{2n} \frac{\V_{n,-1}(2r\sqrt{\Lambda}+t)}{\V_{n,-1}(r/(2\Lambda))}\overset{s}{\underset{i=1}{\sum}}\mu(B^+_{q_i}(r/(2\Lambda)))\\
&=2\Lambda^{2n} \frac{\V_{n,-1}(2r\sqrt{\Lambda}+t)}{\V_{n,-1}(r/(2\Lambda))}\overset{s}{\underset{i=1}{\sum}}\mu(D_1\cap B^+_{q_i}(r/(2\Lambda))).
\end{align*}
It follows from Corollary \ref{imle} that for $1\leq i \leq s$,
\[
\frac{\A_\pm(\Gamma\cap B^+_{q_i}(r/(2\Lambda)))}{\mu(D_1\cap B^+_{q_i}(r/(2\Lambda)) )}\geq h(B^+_{q_i}(r/(2\Lambda)))\geq \mathscr{J}(r/(2\sqrt{\Lambda}))\geq \mathscr{J}(r).
\]
Since $d(q_i,q_j)\geq2r$, $B^+_{q_i}(r/(2\Lambda))\cap B^+_{q_j}(r/(2\Lambda))=\emptyset$.  Hence, we have
\begin{align*}
\mu(\tilde{\Gamma}^t)
\leq 2\Lambda^{2n} \frac{\V_{n,-1}(2r\sqrt{\Lambda}+t)}{\mathscr{J}(r)\V_{n,-1}(r/(2\Lambda))}\A_\pm(\Gamma),
\end{align*}
which implies
\begin{align*}
\mu(\tilde{\Gamma}^t)&\leq 2\Lambda^{2n}\frac{\V_{n,-1}(2r\sqrt{\Lambda}+t)}{\mathscr{J}(r)\V_{n,-1}(r/(2\Lambda))}\min\{\A_\pm(\Gamma)\}\\
&=2\Lambda^{2n}\frac{\mathscr{I}\V_{n,-1}(2r\sqrt{\Lambda}+t)}{\mathscr{J}(r)\V_{n,-1}(r/(2\Lambda))}\min\{\mu(D_1),\mu(D_2)\}\\
&\leq 2\Lambda^{2n}\frac{\mathscr{I}\V_{n,-1}(2r\sqrt{\Lambda}+t)}{\mathscr{J}(r)\V_{n,-1}(r/(2\Lambda))}\mu(D_1). \tag{6.8}\label{*6}
\end{align*}

Now let $t=2r\sqrt{\Lambda}$.

\noindent \textbf{Claim}:
If $q\in \tilde{D}_2-\tilde{\Gamma}^{2r{\sqrt\Lambda}}$, then $q$ must be contained in some Dirichlet region $\mathscr{D}_i$, $m+1\leq i \leq l$.

Choose a point $x\in (\tilde{D}_1\cup \tilde{\Gamma})$. Suppose that the minimal
geodesic from $x$ to $q$ intersects $\tilde{\Gamma}$ at $y$. Thus,
\begin{align*}
d(x,q)\geq d(y,q)\geq d(\tilde{\Gamma}, q)>2r{\sqrt{\Lambda}},
\end{align*}
which implies that $d(q_i,q)>2r{\sqrt{\Lambda}}$, $1\leq i \leq m$. Since $\cup_{1\leq i\leq l}B^+_{q_i}(2r\sqrt{\Lambda})\supset M$, there exists $i\in \{m+1,\ldots, l\}$ such that $q\in B^+_{q_{i}}(2r\sqrt{\Lambda})$. Choose $i_0\in \{m+1,\ldots, l\}$ such that
\[
d(q_{i_0}, q)=\underset{m+1\leq i\leq
l}{\min}d(q_i,q)<2r\sqrt{\Lambda}.
\]
Then $q\in \mathscr{D}_{i_0}$. The claim is true. Hence, $\tilde{D}_2-\tilde{\Gamma}^{2r\sqrt{\Lambda}}\subset \cup_{m+1\leq i\leq l}\mathscr{D}_i$.

By (\ref{*4}), (\ref{*6}) and (\ref{**}), we have
\begin{align*}
\mu(\tilde{D}_1-\tilde{\Gamma}^{2r\sqrt{\Lambda}})&\geq \mu(D_1\cap (\tilde{D}_1-\tilde{\Gamma}^{2r\sqrt{\Lambda}}))\\
&= \mu(D_1)-\mu(D_1\cap (\tilde{D}_2-\tilde{\Gamma}^{2r\sqrt{\Lambda}}))-\mu(D_1\cap \tilde{\Gamma}^{2r\sqrt{\Lambda}})\\
&\geq \mu(D_1)-\overset{l}{\underset{i=m+1}{\sum}}\mu(D_1\cap \mathscr{D}_i)-\mu(\tilde{\Gamma}^{2r\sqrt{\Lambda}})\\
&\geq \mu(D_1)-\frac{1}{4}\mu(D_1)-\frac14 \mu(D_1)=\frac12\mu(D_1)>0.\tag{6.9}\label{*7}
\end{align*}

Lemma \ref{Coule} yields
\[
\lambda_1(M)\leq {\Lambda}\max\{\lambda_1(\tilde{D}_1),\lambda(\tilde{D}_2)\}.\tag{6.10}\label{*8}
\]
Without loss of generality, we suppose that $\lambda_1(\tilde{D}_1)\geq \lambda_1(\tilde{D}_2)$. Now, we estimate $\lambda_1(\tilde{D}_1)$.
Define a function on $\tilde{D}_1$ by
\[
f(q):=
\left \{
\begin{array}{lll}
&\frac{d(\tilde{\Gamma},q)}{2r\sqrt{\Lambda}}, &q\in \tilde{D}_1\cap \tilde{\Gamma}^{2r\sqrt{\Lambda}} \\
\\
&1,&q\in \tilde{D}_1-\tilde{\Gamma}^{2r\sqrt{\Lambda}}.
\end{array}
\right.
\]
Clearly,
\[
F^{*2}(df(q))=
\left \{
\begin{array}{lll}
&\frac{1}{4r^2{\Lambda}}, &q\in \tilde{D}_1\cap \tilde{\Gamma}^{2r\sqrt{\Lambda}} \\
\\
&0,&q\in \tilde{D}_1-\tilde{\Gamma}^{2r\sqrt{\Lambda}}.
\end{array}
\right.
\]
 (\ref{*6}) together with (\ref{*7}) yields that
\begin{align*}
&\int_{\tilde{D}_1}F^{*2}(df)d\mu\leq \frac{1}{4r^2{\Lambda}}\mu(\tilde{\Gamma}^{2r\sqrt{\Lambda}})\leq \Lambda^{2n-1}\frac{\mathscr{I}\V_{n,-1}(4r\sqrt{\Lambda})}{2r^2\mathscr{J}(r)\V_{n,-1}(r/(2\Lambda))}\mu(D_1),\\
&\int_{\tilde{D}_1}f^2d\mu\geq \mu(\tilde{D}_1)-\mu(\tilde{\Gamma}^{2r\sqrt{\Lambda}})\geq \frac12\mu(D_1).
\end{align*}
Thus, by (\ref{Es1}), we obtain
\begin{align*}
\lambda_1(\tilde{D}_1)&\leq \frac{\int_{\tilde{D}_1}F^{*2}(df)d\mu}{\int_{\tilde{D}_1}f^2d\mu}\leq \Lambda^{2n-1}\frac{\mathscr{I}\V_{n,-1}(4r\sqrt{\Lambda})}{r^2\mathscr{J}(r)\V_{n,-1}(r/(2\Lambda))}\\
&\leq \frac{C_3(n,\Lambda)\,\mathscr{I}}{r},
\end{align*}
for
\[
0<r<\min\left\{\frac{1}{4\sqrt{\Lambda}},\frac{1}{C_2(n,\Lambda)\,\mathscr{I}}\right\}.
\]

Hence, (\ref{*8}) yields that
\[
\lambda_1(M)\leq C_4(n,\Lambda)(\mathscr{I}+\mathscr{I}^2).
\]
We are done by letting $\epsilon\rightarrow 0^+$.
\end{proof}

For Randers metrics, we have the following corollary.
\begin{corollary}\label{Randers1}
Let $(M,\alpha+\beta)$ be a $n$-dimensional closed Randers manifold. Then
\[
\Lambda_F = \frac{(1+b)^2}{(1-b)^2}=\lambda^2_F,
\]
where $b:=\sup_{x\in M}\|\beta\|_\alpha$. Hence,
$\lambda_1(M)\leq C(n,b)\left(\delta
{\mathbbm{h}}(M)+{\mathbbm{h}^2}(M)\right)$.
\end{corollary}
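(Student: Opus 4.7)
The strategy is to establish the two explicit formulas $\lambda_F = (1+b)/(1-b)$ and $\Lambda_F = (1+b)^2/(1-b)^2$, from which $\Lambda_F = \lambda_F^2$ is immediate and the bound on $\lambda_1(M)$ follows at once from Theorem \ref{Th2}.

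For $\lambda_F$, I would use the quadratic character of $\alpha$: since $\alpha(-y) = \alpha(y)$, we have $F(-y) = \alpha(y) - \beta(y)$. Setting $\tau := \beta(y)/\alpha(y) \in [-b, b]$, the quotient $F(y)/F(-y) = (1+\tau)/(1-\tau)$ is strictly monotone in $\tau$, whence $\lambda_F = (1+b)/(1-b)$, attained when $y$ is parallel to the $a$-dual vector field $B^\sharp$ of $\beta$ at a point where $\|\beta\|_\alpha = b$.

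For $\Lambda_F$, I would first derive the fundamental tensor by twice differentiating $F^2 = (\alpha + \beta)^2$ in the fiber variable, obtaining
\[
g_{ij}(y) = (\ell_i + b_i)(\ell_j + b_j) + \frac{F(y)}{\alpha(y)}\,\bigl(a_{ij} - \ell_i \ell_j\bigr), \qquad \ell_i := \frac{a_{ij}\,y^j}{\alpha(y)}.
\]
Contracting with $Y^i Y^j$ and dividing by $\alpha(Y)^2$ yields the key identity
\[
\frac{g_Z(Y,Y)}{\alpha(Y)^2} = (c+t)^2 + s\,(1-c^2),
\]
where $c := a(Z,Y)/(\alpha(Z)\alpha(Y)) \in [-1,1]$, $t := \beta(Y)/\alpha(Y) \in [-b,b]$, and $s := F(Z)/\alpha(Z) \in [1-b, 1+b]$. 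The central step is then the uniform two-sided estimate $(1-b)^2 \leq g_Z(Y,Y)/\alpha(Y)^2 \leq (1+b)^2$. Monotonicity in $s$ reduces the upper bound to maximizing the quadratic $-bc^2 + 2tc + t^2 + (1+b)$ in $c$; the critical value at $c = t/b$ equals $(1+b)(t^2 + b)/b$, which is bounded by $(1+b)^2$ since $t^2 \leq b^2$. A symmetric argument with $s$ replaced by $1-b$ and critical point $c = -t/b$ gives the lower bound $(1-b)(1-t^2/b) \geq (1-b)^2$. Sharpness of both bounds is seen by picking a point where $\|\beta\|_\alpha = b$, taking $Y$ parallel to $B^\sharp$, and $Z$ parallel to $\pm B^\sharp$, which simultaneously forces $t = b$, $c = \pm 1$, and $s = 1 \pm b$, so that the expression equals $(1 \pm b)^2$. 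Thus $\Lambda_F = (1+b)^2/(1-b)^2 = \lambda_F^2$, and the inequality $\lambda_1(M) \leq C(n,b)(\delta \mathbbm{h}(M) + \mathbbm{h}^2(M))$ is then immediate from Theorem \ref{Th2}.

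I do not anticipate a serious obstacle: once the fundamental tensor is written in the compact form above, everything reduces to a one-variable quadratic optimization in $c$. The only minor care required is to verify that the extremal configurations are realized by admissible pairs $(Y,Z)$ rather than abstract triples $(c,t,s)$; this is automatic because the single choice $Y \parallel B^\sharp$, $Z \parallel \pm B^\sharp$ realizes the extremal $c$, $t$, and $s$ all at once.
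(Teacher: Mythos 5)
Your proof is correct and follows essentially the same route as the paper: both write down the Randers fundamental tensor $g_{ij} = (\ell_i+b_i)(\ell_j+b_j) + \tfrac{F}{\alpha}(a_{ij}-\ell_i\ell_j)$, reduce $g_Z(Y,Y)/\alpha(Y)^2$ to a scalar expression in the cosine, $\beta(Y)/\alpha(Y)$, and $F(Z)/\alpha(Z)$, and optimize a quadratic. The only cosmetic differences are that the paper normalizes $\alpha(Y)=\alpha(Z)=1$ and cites Shen for the formula rather than deriving it, and leaves the one-variable optimization and the computation of $\lambda_F=(1+b)/(1-b)$ implicit, whereas you spell both out.
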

\begin{proof}Since $M$ is closed, there exists a point $x\in M$, such that $\|\beta\|_\alpha(x)=b$.
Choose $y$, $X\in T_xM$ with $\|X\|_\alpha=\|y\|_\alpha=1$. For convenience, we set $y=s X+X_1^\bot$ and $\beta=t X+ X_2^\bot$. Here, we view $\beta$ as a tangent vector in $(T_xM, \alpha)$ and $\langle X, X_i^\bot\rangle_\alpha=0$, $i=1,2$.
By \cite[(1.6)]{Sh1}, one has
\[
g_y(X,X)=[1+\beta(y)](1-s^2)+(s+t)^2,\ -1\leq s\leq 1,\ -b\leq t\leq b.
\]
Clearly, $(1-b)^2\leq g_y(X,X)$, with equality if and only if $y=\pm X$ and $\beta=\mp bX$, and $g_y(X,X)\leq (1+b)^2$, with equality if and only if $y=\pm X$ and $\beta= \pm bX$. Hence, $\Lambda_F={(1+b)^2}{(1-b)^{-2}}$.
\end{proof}
\begin{remark}Given a Randers metric $F=\alpha+\beta$, the Holmes-Thompson measure $d\mu_{HT}=dV_\alpha$, where $dV_\alpha$ is the Riemannian measure induced by $\alpha$. By \cite[Example 3.2.1]{Sh1}, one can show
\[
\frac{\lambda_1(M,\alpha)}{(1+b)^2}\leq \lambda_1(M,F)\leq\frac{\lambda_1(M,\alpha)}{(1-b)^2},
\]
where $\lambda_1(M,\alpha)$ (resp. $\lambda_1(M,F)$) is the first eigenvalue of $(M,\alpha)$ (resp. $(M,F,d\mu_{HT})$).
\end{remark}

By \cite{Sh2,ZY}, we can see that the upper bound for the uniform constant in Lemma \ref{le2} can be replaced by the lower bound for the S-curvature. Using the similar argument, one can show the following theorem.
\begin{theorem}\label{Sone}
Let $(M,F,d\mu)$ be a closed Finsler $n$-manifold with the Ricci
curvature $\mathbf{Ric}\geq -(n-1)\delta^2$, the S-curavture
$\mathbf{S}\geq (n-1)\eta$ and the reversibility $\lambda_F\leq
\lambda$.
Then
\[
\lambda_1(M)\leq C(n,\lambda,\eta)\left(\delta
{\mathbbm{h}}(M)+{\mathbbm{h}^2}(M)\right).
\]
\end{theorem}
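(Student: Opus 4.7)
The plan is to mirror the proof of Theorem \ref{Th2} line by line, substituting S-curvature control for the uniform-constant control wherever volume comparison is invoked, and using $\lambda_F\leq\lambda$ wherever the nonreversibility of $F$ enters. The nontrivial modifications are concentrated in Sections~4--5, after which the argument of Theorem \ref{Th2} transfers essentially mechanically.

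The first task is to reprove the volume comparison of Lemma \ref{le2} under the hypothesis $\mathbf{S}\geq(n-1)\eta$. Since $\mathbf{S}(\dot\gamma_y(t))=d\tau(\dot\gamma_y(t))/dt$ by definition, the bound integrates to $\tau(\gamma_y(R))-\tau(\gamma_y(r))\geq(n-1)\eta(R-r)$ for $0<r\leq R$. Combined with the monotonicity $\partial_r[\hat\sigma(r,y)/(e^{-\tau(\gamma_y(r))}\mathfrak{s}_k^{n-1}(r))]\leq 0$ from \cite{Sh2,ZY}, this yields
\[
\frac{\hat\sigma(\min\{r,i_y\},y)}{\hat\sigma(\min\{R,i_y\},y)}\geq e^{(n-1)\eta(R-r)}\,\frac{\mathfrak{s}_k^{n-1}(r)}{\mathfrak{s}_k^{n-1}(R)},
\]
so inequalities (1)--(3) of Lemma \ref{le2} hold with the factor $\Lambda^{-2n}$ replaced by a corresponding exponential in $(n-1)\eta$ and the relevant radius. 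Lemma \ref{le3} used only the reversibility and continues to give $d\A_\pm\geq\lambda^{-1}d\mathfrak{A}_+$.

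With these modified ingredients I would redo the Buser-type isoperimetric inequality of Lemma \ref{infact} and Corollary \ref{imle}, absorbing every old $\Lambda$-dependent factor into a constant depending on $n,\lambda,\eta$ and on an upper bound for the radii in use. In the proof of Theorem \ref{Th2} the factors $\sqrt{\Lambda}$ in the construction of an $r$-package may be replaced by $\lambda$, since a maximal $2r$-separated set in the symmetrized distance provides forward balls of radius $2r\lambda$ that still cover $M$. After the scaling reduction of Lemma \ref{conlem} we may assume $\delta=1$; then the parameter $r$ is chosen in $(0,\min\{r_0(n,\lambda,\eta),\,C(n,\lambda,\eta)^{-1}/\mathscr{I}\})$ with $r_0$ fixed, which keeps every S-curvature exponential bounded by a constant depending only on $n$ and $\eta$. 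The remainder of the argument then yields $\lambda_1(M)\leq C(n,\lambda,\eta)(\mathscr{I}+\mathscr{I}^2)$, and Theorem \ref{Sone} follows on letting $\epsilon\to 0^+$.

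The principal obstacle is the reverse-metric argument in Case~II of Lemma \ref{infact}. A short computation starting from $\tilde\tau(y)=\tau(-y)$ shows $\widetilde{\mathbf{S}}(y)=-\mathbf{S}(-y)$, so $\mathbf{S}\geq(n-1)\eta$ yields only the \emph{upper} bound $\widetilde{\mathbf{S}}\leq-(n-1)\eta$ and does not produce the lower bound needed for a Bishop--Gromov comparison on $\tilde F$. To bridge this gap I would reorganize Case~II so that the volume comparison is still applied to $(M,F)$: using Fubini in the opposite variable and fixing $w_2\in W_2$, one obtains a subset $U_1'\subset W_1$ on which $d(w_2,q^*)\leq d(q^*,q)$, so that in polar coordinates based at $w_2$ one has $\rho^*\leq\rho/2$; one then compares $\hat\sigma(\rho^*,y)$ not to the integral from $\rho^*$ to $\rho^{**}$ but to the integral over $[\rho^*,2\rho^*]\subset[\rho^*,\rho^{**}]$, where the modified Lemma \ref{le2} does give a clean lower bound. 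Verifying that this variant produces an estimate of the same order as Case~I, with the $\alpha$-balancing step of the proof of Lemma \ref{infact} carrying over unchanged, is the one genuine calculation that remains.
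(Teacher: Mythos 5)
Your overall plan --- redo Lemma \ref{le2} with each factor $\Lambda^{-2n}$ replaced by an exponential $e^{(n-1)\eta(R-r)}$ coming from $\tau(\gamma_y(R))-\tau(\gamma_y(r))=\int_r^R\mathbf{S}(\dot\gamma_y)\,dt\geq(n-1)\eta(R-r)$, keep Lemma \ref{le3} (reversibility only) with $\lambda$ in place of $\Lambda^{1/2}$, replace the $\sqrt\Lambda$'s by $\lambda$ in the forward/backward distance comparisons, restrict $r$ so that the S-curvature exponentials stay controlled, and then push through Lemma \ref{infact}, Corollary \ref{imle}, and the proof of Theorem \ref{Th2} --- is exactly what the paper's one-sentence remark intends. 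You have also correctly spotted the one place where the ``similar argument'' is not actually similar: the reverse-metric step in Case~II of Lemma \ref{infact}. Your computation $\widetilde{\mathbf{S}}(y)=-\mathbf{S}(-y)$ is right (from $\tilde\tau(y)=\tau(-y)$ and Lemma \ref{rele1}), so $\mathbf{S}\geq(n-1)\eta$ only gives the upper bound $\widetilde{\mathbf{S}}\leq-(n-1)\eta$, whereas a \emph{lower} bound on $\widetilde{\mathbf{S}}$ is what the Bishop--Gromov ratio $e^{\tilde\tau(\tilde\gamma_y(R))-\tilde\tau(\tilde\gamma_y(r))}$ requires. The paper's own proof is just a remark and never addresses this.

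Unfortunately your proposed repair does not close the gap. Fixing $w_2\in W_2$ by Fubini over $V_2$ produces $U_1'\subset W_1$ on which $\rho^*\leq\rho/2$ in polar coordinates about $w_2$, and truncating Bishop--Gromov to $[\rho^*,2\rho^*]$ does give a clean bound $\hat\sigma(\rho^*,y)\geq C\int_{\rho^*}^{2\rho^*}\hat\sigma(t,y)\,dt$. But the inequality defining $U_1'$ forces $\rho\geq 2\rho^*$, so the point $q=(\rho,y)$ itself never lies in $[\rho^*,2\rho^*]$; moreover $\rho^*$ can be arbitrarily small while $\rho^{**}$ is of order $r$, so $\int_{\rho^*}^{2\rho^*}\hat\sigma\,dt$ may be a negligible fraction of $\int_{\rho^*}^{\rho^{**}}\hat\sigma\,dt$. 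Hence the resulting lower bound on $\A_\pm(\Gamma)$ does not control $\mu(U_1')$, and the ``one genuine calculation that remains'' would in fact fail. What is needed is a lower bound on $e^{\tilde\tau(\tilde\gamma_y(R))-\tilde\tau(\tilde\gamma_y(r))}$, which would follow from a two-sided hypothesis on $\mathbf{S}$ (so that $\widetilde{\mathbf{S}}$ is also bounded below) or from the uniform-constant bound of Theorem \ref{Th2}; the one-sided $\mathbf{S}\geq(n-1)\eta$ together with $\lambda_F\leq\lambda$ does not yield it. As written, both your proposal and the paper's own sketch leave Case~II of Lemma \ref{infact} unproven under the stated hypotheses.
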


It follows from \cite{Sh1} that for the Busemann-Hausdorff measure, the S-curvature of a Berwald manifold always vanishes. Furthermore, we have the following
\begin{theorem}\label{Berwald1}
For the Holmes-Thompson measure, the S-curvature of a Berwald manifold also vanishes.
\end{theorem}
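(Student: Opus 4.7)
The plan is to exploit the defining property of Berwald manifolds: the Chern connection coefficients $\Gamma^i_{jk}(x)$ are independent of $y$, so parallel transport is a genuine linear map between tangent spaces, and it is a Finsler isometry of $(T_xM,F_x)$ with $(T_{x'}M,F_{x'})$. I will use this to show that $\tau(\dot\gamma(t))$ is constant along every geodesic $\gamma$, which immediately gives $\mathbf{S}\equiv 0$.

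Fix a geodesic $\gamma$ with $\dot\gamma(0)=y_0$, choose a basis $\{e_i(0)\}$ of $T_{\gamma(0)}M$, and let $\{e_i(t)\}$ be its parallel transport along $\gamma$. In local coordinates $(x^i)$ near $\gamma$, write $e_i(t)=M^j_i(t)\,\partial_{x^j}|_{\gamma(t)}$, and for $v\in T_{\gamma(t)}M$ denote by $\tilde v^i$ its components in the parallel frame. The Berwald isometry property asserts that $F(\gamma(t),\tilde v^i e_i(t))$ is independent of $t$ for every fixed $\tilde v\in\mathbb{R}^n$. Differentiating twice in $\tilde v$ shows that the parallel-frame fundamental tensor $\tilde g_{ij}(t,\tilde v)$ is independent of $t$, and the indicatrix $\tilde B:=\{\tilde v:F(\gamma(t),\tilde v^i e_i(t))<1\}$ is a fixed subset of $\mathbb{R}^n$. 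Moreover, since $\dot\gamma(t)$ is the parallel transport of $\dot\gamma(0)$, its components $\tilde y_0$ in the parallel frame are constants.

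Now apply the change-of-basis identities $\det g_{ij}=(\det M(t))^{-2}\det\tilde g_{ij}$ and $dv^1\wedge\cdots\wedge dv^n=|\det M(t)|\,d\tilde v^1\wedge\cdots\wedge d\tilde v^n$ to the definition of the Holmes--Thompson density:
\[
\sigma_{HT}(\gamma(t))=\frac{1}{|\det M(t)|}\cdot\frac{1}{\vol(\mathbb{B}^n)}\int_{\tilde B}\det\tilde g_{ij}(0,\tilde v)\,d\tilde v=\frac{C_1}{|\det M(t)|},
\]
with $C_1$ independent of $t$. The same identity gives $\sqrt{\det g_{ij}(\gamma(t),\dot\gamma(t))}=C_2/|\det M(t)|$ for some $t$-independent $C_2$, because both $\tilde g_{ij}$ and the parallel-frame components of $\dot\gamma(t)$ are constant in $t$. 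The $|\det M(t)|$ factors cancel in the ratio defining $\tau$, so $\tau(\dot\gamma(t))\equiv\log(C_2/C_1)$ and $\mathbf{S}(y_0)=\frac{d}{dt}\tau(\dot\gamma(t))|_{t=0}=0$. Since $y_0$ was arbitrary, $\mathbf{S}\equiv 0$.

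The main point of care is the bookkeeping of the change-of-basis factor $\det M(t)$: it appears with opposite powers in $\sigma_{HT}$ and in $\sqrt{\det g_{ij}}$ because both transform as scalar densities of weight one under linear changes of frame. This exact cancellation is what makes the argument work for the Holmes--Thompson measure---just as in the analogous (already known) argument for the Busemann--Hausdorff measure---whereas for a generic measure $d\mu$ the Berwald hypothesis alone would not suffice. Once this cancellation is in place, vanishing of $\mathbf{S}$ is a direct consequence of the $t$-invariance of $\tilde B$, $\tilde g_{ij}$, and the parallel-frame components of $\dot\gamma$, all of which follow automatically from the Berwald condition.
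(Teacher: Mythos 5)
Your proof is correct and follows essentially the same strategy as the paper's: use the Berwald property to make parallel transport a linear isometry between tangent spaces, observe that it fixes the indicatrix and the fundamental tensor in the parallel frame, and conclude that the Jacobian factor from the frame change cancels in the ratio defining $\tau$, so $\tau$ is constant along geodesics. The paper packages the change-of-basis bookkeeping via the pullback of the volume form and a lemma from Shen's book, whereas you write out $|\det M(t)|$ explicitly, but the underlying argument is identical.
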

\begin{proof}Let $(M,F)$ be a $n$-dimensional Berwald manifold and let $\gamma_y(t)$ be a unit speed geodesic with $\dot{\gamma}_y(0)=y$. Denote by $P_t$ the parallel transportation along $\gamma_y(t)$. Choose a basis $\{e_i\}$ of $T_{\gamma_y(0)}M$. Then $E_i(t):=P_t e_i$, $1\leq i \leq n$, is a basis of $T_{\gamma_y(t)}M$. Let $(y^i)$ (resp. $(z^i)$) denote the corresponding coordinate system in $T_{\gamma_y(0)}M$ (resp. $T_{\gamma_y(t)}M$). Thus, $z^i\circ P_t=y^i$.

For any $w\in S_{\gamma_y(0)}M$, we have
\begin{align*}
\frac{d}{dt} g_{(\gamma_y(t), P_tw)}(E_i(t),E_j(t))=\frac{2}{F(P_tw)}A_{(\gamma_y(t), P_tw)}\left(E_i(t),E_j(t),\nabla_{\dot{\gamma}_y} P_t w\right)=0.
\end{align*}
Note that $P_t(B_{\gamma_y(0)}M)=B_{\gamma_y(t)}M$, where $B_xM:=\{y\in T_xM: F(x,y)<1\}$. The equation above together with \cite[Lemma 5.3.2]{Sh1} yields that
\begin{align*}
&\int_{v\in B_{\gamma_y(t)}M}\det g_{(\gamma_y(t),v)}(E_i(t),E_j(t))dz^1\wedge\cdots \wedge dz^n\\
=&\int_{w\in B_{\gamma_y(0)}M}\det g_{(\gamma_y(t),P_t w)}(P_t e_i,P_t e_j)P_t^*dz^1\wedge\cdots \wedge P_t^*dz^n\\
=&\int_{w\in B_{\gamma_y(0)}M}\det g_{(\gamma_y(0),w)}(e_i,e_j)dy^1\wedge\cdots \wedge dy^n.
\end{align*}
Thus, $\tau_{HT}(\dot{\gamma_y}(t))=\tau_{HT}(\dot{\gamma_y}(0))$,
which implies that $\mathbf{S}_{HT}\equiv0$.
\end{proof}

Theorem \ref{Sone} together with Theorem \ref{Berwald1} now yields the following

\begin{corollary}
Let $(M,F,d\mu)$ be a $n$-dimensional closed Berwald manifold with the Ricci
curvature $\mathbf{Ric}\geq -(n-1)\delta^2$ and the reversibility $\lambda_F\leq
\lambda$. Then
we have
\[
\lambda_1(M)\leq C(n,\lambda)\left(\delta
{\mathbbm{h}}(M)+{\mathbbm{h}^2}(M)\right).
\]
\end{corollary}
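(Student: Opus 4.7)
The plan is to observe that this corollary is an immediate consequence of Theorem \ref{Sone} combined with the vanishing of the S-curvature on Berwald manifolds. Since the paper fixes $d\mu$ to be either the Busemann-Hausdorff or the Holmes-Thompson measure, I need to handle both cases uniformly by quoting the appropriate vanishing result.

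First, I would recall that for the Busemann-Hausdorff measure on a Berwald manifold, the S-curvature vanishes identically; this is a classical fact (see \cite{Sh1}) and is remarked in the paragraph preceding Theorem \ref{Berwald1}. For the Holmes-Thompson measure, the corresponding vanishing statement is exactly Theorem \ref{Berwald1}. Thus in either admissible case, $\mathbf{S} \equiv 0$ on $(M,F,d\mu)$, and in particular $\mathbf{S} \geq (n-1)\eta$ holds with $\eta = 0$.

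Next, I would apply Theorem \ref{Sone} with the parameter choice $\eta = 0$. The hypothesis $\mathbf{Ric}\geq -(n-1)\delta^2$ is assumed, the reversibility bound $\lambda_F\leq \lambda$ is assumed, and the S-curvature bound is now available with $\eta = 0$. Therefore Theorem \ref{Sone} yields
\[
\lambda_1(M) \leq C(n,\lambda,0)\left(\delta \mathbbm{h}(M) + \mathbbm{h}^2(M)\right).
\]
Absorbing the now-fixed third argument of the constant into the notation gives a constant $C(n,\lambda)$ depending only on $n$ and $\lambda$, which is exactly the claimed inequality.

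There is no real obstacle here: the content of the corollary is packaged entirely in Theorem \ref{Sone} and Theorem \ref{Berwald1}. The only thing that needs a little care is making sure that the S-curvature vanishing invoked is compatible with whichever of the two canonical measures $d\mu$ has been chosen, but the two cited facts together cover both possibilities.
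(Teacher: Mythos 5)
Your proposal is correct and matches the paper's proof exactly: the paper derives this corollary by combining Theorem \ref{Sone} (with $\eta=0$) with Theorem \ref{Berwald1} and the classical fact that the S-curvature of a Berwald manifold vanishes for the Busemann-Hausdorff measure. Nothing more needs to be said.
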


\end{document}